\newcommand{\R}{\ensuremath{\mathbb R}}
\newcommand{\N}{\ensuremath{\mathbb N}}
\newcommand{\abs}[1]{\ensuremath{ \left\vert #1 \right\vert } }
\newcommand{\eps}{\ensuremath{\varepsilon}}
\newcommand{\ndist}[2]{\ensuremath{\mathcal{N}(#1,#2)}}
\newcommand{\eqdist}{\ensuremath{\stackrel{\mathrm{d}}{=}}}
\newcommand{\cond}{\mid}
\newcommand{\prob}{\operatorname{P}}
\newcommand{\E}[1]{\operatorname{E}\left(#1\right)}
\newcommand{\ds}{\ensuremath{ \,\mathrm{d}s } }
\newcommand{\dt}{\ensuremath{ \,\mathrm{d}t } }
\newcommand{\dx}{\ensuremath{ \,\mathrm{d}x } }
\theoremstyle{plain}
	\newtheorem{lem}{Lemma}
	\newtheorem{thm}{Theorem}
	\newtheorem{cor}{Corollary}
	\newtheorem{prop}{Proposition}
\theoremstyle{definition}
\theoremstyle{remark}
	\newtheorem{rem}{Remark}
\begin{document}

\title[Adaptive Approximation of the Brownian Minimum] 
	{Adaptive Approximation of the Minimum\\   
	of Brownian Motion}

\author[Calvin]
{James M. Calvin}
\address{Department of Computer Science\\
	New Jersey Institute of Technology\\
	Newark, NJ 07102-1982\\
	USA}
\email{calvin@njit.edu}

\author[Hefter]
{Mario Hefter}
\address{Department of Mathematics\\
	Technische Universit\"at Kaisers\-lautern\\
	67653 Kaisers\-lautern\\
	Germany}
\email{hefter@mathematik.uni-kl.de}

\author[Herzwurm]
{Andr\'{e} Herzwurm}
\address{Department of Mathematics\\
	Technische Universit\"at Kaisers\-lautern\\
	67653 Kaisers\-lautern\\
	Germany}
\email{herzwurm@mathematik.uni-kl.de}

\begin{abstract}
We study the error in approximating the minimum of a Brownian motion
on the unit interval based on finitely many point evaluations.
We construct an algorithm that adaptively chooses
the points at which to evaluate the Brownian path.
In contrast to the $1/2$ convergence rate of optimal nonadaptive algorithms,
the proposed adaptive algorithm converges at an arbitrarily high polynomial rate.
\end{abstract}

\keywords{
	Brownian motion;
	global optimization;
	pathwise approximation;
	adaptive algorithm}

\maketitle

\section{Introduction}\label{sec:intro}
We study the pathwise approximation of the minimum
\begin{align*}
	M = \inf_{0\leq t\leq 1} W(t)
\end{align*}
of a Brownian motion $W$ on the unit interval $[0,1]$ based on
adaptively chosen function values of $W$. In contrast to nonadaptive
algorithms, which evaluate a function always at the same points,
adaptive algorithms may sequentially choose points at which to
evaluate the function. For the present problem, this means that
the $n$-th evaluation site may depend on the first $n-1$ observed values
of the Brownian path $W$. Given a number of evaluation sites,
we are interested in algorithms that have a small error in the residual sense
with respect to the $L_p$-norm.

A key motivation for studying this approximation problem stems from numerics
for the reflected Brownian motion given by
\begin{align*}
	\hat{W}(t) = W(t) - \inf_{0\le s\le t}W(s).
\end{align*}
Apart from its use in queueing theory \cite{harrison}, the reflected
Brownian motion also appears in the context of nonlinear stochastic differential
equations. More precisely, the solution process of a particular instance of
a Cox-Ingersoll-Ross process is given by the square of $\hat{W}$.
Hence numerical methods for the approximation of $M$ can be used for the
approximation of $\hat{W}$ and thus for the corresponding Cox-Ingersoll-Ross process.
We refer to~\cite{2015-bessel} for such an application of the algorithm
proposed in this paper.

The complexity analysis of pathwise approximation of the Brownian minimum $M$ based
on finitely many function evaluations was initiated in \cite{ritter1990},
where it was shown that for any nonadaptive algorithm
using $n$ function evaluations the average error is at least of order $n^{-1/2}$.
Moreover, a simple equidistant discretisation already
has an error of order $n^{-1/2}$, and thus achieves
the lower bound for nonadaptive algorithms.
A detailed analysis of the asymptotics of the pathwise error
in case of an equidistant discretisation was undertaken in \cite{Asmussen}.

The situation regarding adaptive algorithms for the pathwise approximation of $M$
is rather different.
In \cite{Calvtcs:2007}, it was shown that for any (adaptive) algorithm
using $n$ function evaluations the average error is at least of order
$\exp(-c\, n/\log(n))$ for some positive constant~$c$.
In contrast to the nonadaptive case, we are unaware of algorithms
with error bounds matching the lower bound for adaptive algorithms.
In this paper we analyze an adaptive algorithm that has an average error
at most of order $n^{-r}$, for any positive number~$r$. Hence this algorithm
converges at an arbitrarily high polynomial rate.
In \cite{Calvin2011}, the same algorithm was shown to converge
in a probabilistic sense.
We are unaware of previous results showing the increased power of
adaptive methods relative to nonadaptive methods with respect to the $L_p$ error.

\bigskip
Several optimization algorithms have been proposed that
use the Brownian motion as a model for an unknown function to be
minimized, including \cite{kushner,Mockus:1972,Zi:1985,Calvin1997}.
One of the ideas proposed
in \cite{kushner} is to evaluate the function next at the
point where the function has the maximum conditional
probability of having a value less than the minimum of the conditional mean,
minus some positive amount (tending to zero).
This is the same idea behind our algorithm, described in Section 2.
The question of convergence of such (Bayesian) methods in general
is addressed in \cite{Mockus:1974}.
Several algorithms, with an emphasis on the question of convergence,
are described in \cite{ToZi:1989}.

In global optimization, the function to be optimized is typically assumed
to be a member of some class of functions. Often, the worst-case error
of algorithms on such a class of functions is studied.
However, if the function class is convex and symmetric, then the worst-case
error for any method using $n$ function evaluations is at
least as large as the error of a suitable nonadaptive method
using $n+1$ evaluations, see, e.g., \cite[Chap.~1.3]{novak}. In this case, a worst-case
analysis cannot justify the use of adaptive algorithms for global optimization.
An average-case analysis, where it is assumed that the function to be optimized
is drawn from a probability distribution, is an alternative to justify
adaptive algorithms for general function classes. Brownian motion is suitable
for such an average-case study since its analysis is tractable, yet the answers to the
complexity questions are far from obvious. As already explained, adaptive methods
are much more powerful than nonadaptive methods for
optimization of Brownian motion.

\bigskip
This paper is organized as follows. In Section~\ref{sec:algo} we present
our algorithm with corresponding error bound, see~Theorem~\ref{thm:main}.
In Section~\ref{sec:numeric} we illustrate our results by numerical experiments.
The rest of the paper is devoted to proving Theorem~\ref{thm:main}.

\section{Algorithm and Main Result}\label{sec:algo}
Let $f\colon [0,1]\to\R$ be a continuous function with $f(0)=0$.
We will recursively define a sequence
\begin{align}\label{eq:sequence}
	t_0,t_1,\ldots\in[0,1]
\end{align}
of pairwise distinct points from the unit interval.
These points are chosen adaptively, i.e., the $k$-th evaluation site $t_k$
may depend on the previous values $t_0,f(t_0),\ldots,t_{k-1},f(t_{k-1})$.
We use the discrete minimum over these points given by
\begin{align*}
	M_n = \min_{0\leq i\leq n} f(t_i)
\end{align*}
for $n\in\N_0$, as an approximation of the global minimum
\begin{align*}
	M = \inf_{0\leq t\leq 1}f(t)
\end{align*}
of $f$.
The aim is that
$M_n$ is a ``good'' approximation of $M$ on average if $f$ is a Brownian motion.

\bigskip
We begin by introducing some notation.
For $n\in\N_0$ we denote the ordered first $n$ evaluation sites by
\begin{align*}
	0 \leq  t_0^n < t_1^n < \ldots < t_n^n \leq  1
\end{align*}
such that $\{t^n_i:0\leq i\leq n\}=\{t_i: 0\leq i \leq n\}$.
Furthermore, for $n\in\N$ let
\begin{align*}
	\tau_n = \min_{1\leq i\leq n} t^n_i-t^n_{i-1}
\end{align*}
be the smallest distance between two evaluation sites.
Moreover, we define $g\colon{]0,1{]}}\to{{[}0,\infty[}$ by
\begin{align*}
	g(x) = \sqrt{\lambda x \log(1/x)} \,,
\end{align*}
where $\log$ denotes the natural logarithm.
Here, $\lambda\in{[1,\infty[}$ is a fixed parameter, which is convenient to
be left unspecified at this point.

Now, we define the sequence appearing in \eqref{eq:sequence}.
The first two evaluation sites are nonadaptively chosen to be $t_1=1$
and $t_2=1/2$. Moreover, for notational convenience we set $t_0=0$.

Let $n\geq2$, and suppose that the algorithm has already constructed
the first $n$ points $t_0,\dots,t_n$. The key quantity for choosing
the next evaluation site is given by
\begin{align}\label{eq:rho-i}
	\rho_i^n = \frac{ t^n_i-t^n_{i-1} }{
			\left( f(t^n_{i-1})-M_n+g(\tau_n) \right)
			\left( f(t^n_{i})-M_n+g(\tau_n) \right)
		}
\end{align}
for $i\in\{1,\ldots,n\}$. The algorithm splits the interval with the
largest value of $\rho^n_i$ at the midpoint.
More precisely, let $j\in\{1,\ldots,n\}$ be the smallest index
such that $\rho^n_{j}=\rho^n$ where
\begin{align*}
	\rho^n = \max_{1\leq i\leq n} \rho^n_i.
\end{align*}
The next function evaluation is then made at the midpoint
\begin{align*}
	t_{n+1} = \frac 12\left( t^n_{{j-1}}+t^n_{{j}} \right)
\end{align*}
of the corresponding subinterval.

As we use the discrete minimum $M_n$ as an approximation of the global
minimum $M$, the error of the proposed algorithm is given by
\begin{align*}
	\Delta_{n} = \Delta_{n,\lambda}(f) = M_n-M
\end{align*}
for $n\in\N_0$.

We stress that all quantities defined above depend on the prespecified
choice of
the parameter $\lambda\in{[1,\infty[}$. In particular, $\lambda$ affects
all adaptively chosen evaluation sites $t_3,t_4,\ldots$ and hence $M_n$.
However, we often do not explicitly indicate this dependence. 

The following theorem shows that this algorithm achieves an arbitrarily
high polynomial convergence rate w.r.t.~the $L_p$-norm in case of a
Brownian motion $W={(W(t))_{0\leq t\leq 1}}$.

\begin{thm}\label{thm:main}
	For all $r\in{[1,\infty[}$ and for all $p\in{[1,\infty[}$ there exist
	$\lambda\in{[1,\infty[}$ and $c>0$ such that
	\begin{align*}
		\left( \E{ \abs{ \Delta_{n,\lambda}(W) }^p } \right)^{1/p} \leq c\cdot n^{-r}
	\end{align*}
	for all $n\in\N$.
\end{thm}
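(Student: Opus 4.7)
The natural approach combines the classical formula for the minimum of a Brownian bridge with the greedy structure of the algorithm. My first step is to condition on the $\sigma$-algebra $\mathcal{F}_n$ generated by the first $n$ evaluations: then $W$ is a Brownian bridge on each subinterval $[t^n_{i-1},t^n_i]$, and the reflection-principle identity $\prob(\min_{[a,b]} B \le m \cond B(a)=u, B(b)=v) = \exp(-2(u-m)(v-m)/(b-a))$ for $m\le \min(u,v)$ yields the conditional tail estimate
\begin{equation*}
\prob\bigl(\Delta_n > x \cond \mathcal{F}_n\bigr) \le \sum_{i=1}^n \exp\!\Bigl(-\frac{2(u_i+x)(v_i+x)}{\ell_i}\Bigr),
\end{equation*}
with $u_i = f(t^n_{i-1})-M_n$, $v_i = f(t^n_i)-M_n$ and $\ell_i = t^n_i-t^n_{i-1}$. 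Integrating $\prob(\Delta_n > x)$ against $p\,x^{p-1}\,\dx$ is then the skeleton that produces the $L_p$ bound.

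Next I exploit \eqref{eq:rho-i}. Because the algorithm always bisects the subinterval achieving the maximum, $\rho^n_i \le \rho^n$ for every~$i$, i.e.\ $(u_i+g(\tau_n))(v_i+g(\tau_n)) \ge \ell_i/\rho^n$. For $x\ge g(\tau_n)$ each exponent is therefore at least $2/\rho^n$, giving $\prob(\Delta_n > x \cond \mathcal{F}_n) \le n\exp(-2/\rho^n)$, while for $x<g(\tau_n)$ the trivial bound~$1$ suffices. Integrating yields an estimate roughly of the shape
\begin{equation*}
\E{\abs{\Delta_n}^p} \le C_p\, \E{g(\tau_n)^p} + C_p\, \E{n\exp(-2/\rho^n)\,(\rho^n+g(\tau_n))^p},
\end{equation*}
so the whole problem reduces to controlling $\tau_n$ and $\rho^n$: it will suffice to show that with probability at least $1 - n^{-R}$, for $R$ as large as needed, we have $\tau_n \ge n^{-C}$ and, crucially, $\rho^n \le c/\log n$, so that $\exp(-2/\rho^n)$ decays faster than any polynomial in~$n$. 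The freedom to choose $\lambda\in{[1,\infty[}$ large in $g(x)=\sqrt{\lambda x\log(1/x)}$ is precisely what lets any prescribed polynomial rate $r$ be matched.

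The core probabilistic work is therefore the high-probability bound on $\rho^n$. When the algorithm bisects interval~$j$, the new value $f(t_{n+1})$ is, conditionally on $\mathcal{F}_n$, Gaussian with mean $(f(t^n_{j-1})+f(t^n_j))/2$ and variance $\ell_j/4$; on the event that this midpoint value does not fall pathologically close to $M_n - g(\tau_n)$ — an event whose complement has Gaussian-small conditional probability — the two $\rho$-values on the halved intervals drop by a controlled factor. Iterating this dichotomy over many generations of splits, together with the crude a priori bound $\norm{W}_\infty \le c\sqrt{\log n}$ which holds with overwhelming probability, should produce the desired event $A_n$. On $A_n$ paragraph~2 delivers $\E{\abs{\Delta_n}^p\, \mathbf{1}_{A_n}} = O(n^{-rp})$; on~$A_n^c$, Cauchy--Schwarz combined with the deterministic control $\abs{\Delta_n}\le \abs{M}\le \norm{W}_\infty$ (which has moments of all orders) absorbs the rare exception. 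The main obstacle is clearly the iterated probabilistic analysis propagating the $\rho^n$-decay across many adaptive splits with a polynomial tail bound on its failure; this is where the specific logarithmic correction in $g$ and the freedom to take $\lambda$ large become essential.
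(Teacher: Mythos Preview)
Your conditioning and Brownian-bridge tail estimate are fine, and the bound $\rho^n\le c/\log n$ on a high-probability event is indeed the paper's Lemma~\ref{lem:rho} (proved on the deterministic set $F_n$, whose Wiener measure is controlled by a union bound over the standardised increments). But the decomposition you write down,
\[
\E{\abs{\Delta_n}^p}\ \lesssim\ \E{g(\tau_n)^p}\ +\ \E{n\exp(-2/\rho^n)\,(\rho^n+g(\tau_n))^p},
\]
has a first term you never control. The only a priori inequality is $\tau_n\le 1/n$, giving $g(\tau_n)\le\sqrt{\lambda\log(n)/n}$ and hence $\E{g(\tau_n)^p}=O\bigl((\log n/n)^{p/2}\bigr)$ --- convergence rate $1/2$, not arbitrary $r$. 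Your stated sufficient condition ``$\tau_n\ge n^{-C}$'' goes the wrong direction: a \emph{lower} bound on $\tau_n$ makes $g(\tau_n)$ larger, not smaller. What is actually needed is that $\tau_n$ is \emph{super-polynomially small} with high probability, and nothing in your sketch produces this; controlling $\rho^n$ alone cannot, since $\rho^n\le 2/(\lambda\log n)$ is compatible with $\tau_n$ of order $1/n$.

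The paper's route to this missing piece is quite different from anything in your outline. It represents $\sum_{i=1}^n\rho^n_i$ as the integral $\int_0^1(L_n(t)-M_n+g(\tau_n))^{-2}\dt$ and sandwiches it: a counting argument over recent bisections together with a \emph{lower} bound on $\rho^k$ (Lemma~\ref{lem:lower-bound}, Proposition~\ref{prop:upperbounddet}) gives $\sum_i\rho^n_i\gtrsim n/\log(1/\tau_n)$, while comparison to $\int_0^1(W(t)-M+g(\tau_n))^{-2}\dt$ and a Bessel-bridge / Brownian-excursion local-time estimate (Section~\ref{sec:LBH}) gives $\sum_i\rho^n_i\lesssim(\log(1/g(\tau_n)))^4$ on a high-probability event. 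Combining the two forces $g(\tau_n)\le\tilde C\exp(-n^{1/5}/\tilde C)$. This sandwich --- and in particular the upper bound via the random integral $\int_0^1(W(t)-M+\eps)^{-2}\dt$ --- is the heart of the argument, and it is absent from your proposal.
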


\begin{rem}\label{rem:choice-lambda}
	Our analysis shows that
	\begin{align*}
		\lambda \geq 144\cdot (1+p\cdot r)
	\end{align*}
	is sufficient to obtain convergence order $r$ w.r.t.~the $L_p$-norm
	in Theorem~\ref{thm:main}. However, numerical experiments indicate
	an exponential decay even for small values of $\lambda$,
	see Figure~\ref{fig:p2}.
\end{rem}

\begin{rem}
	The number of function evaluations made by the algorithm to produce
	the approximation $M_n$ is a fixed number $n\in\N$
	(we assume that $f(0)=0$ and so we do not count $t_0$).
	Thus we do not consider adaptive stopping rules.
	A straightforward implementation of this algorithm on a computer
	requires operations of order $n^2$.
\end{rem}

An intuitive explanation why this algorithm works in the case of Brownian
motion is as follows. The function $g$ is chosen such that $M_n-M\le g(\tau_n)$
with high probability if $f$ is a Brownian path.
The idea of the algorithm is to next evaluate the function
at the midpoint of the subinterval that is most likely to
have a value less than $M_n-g(\tau_n)$.
Conditional on the values observed up to time $n$, the
probability that the minimum over
$[t^n_{i-1},t^n_i]$ is less than $M_n-g(\tau_n)$ is
\begin{align}\label{eq:underProb}
	\exp\left(-\frac{2}{\rho^n_i}\right),
\end{align}
see \cite{Calvin2001}.
The behavior of the $\{\rho^n_i\}$ defined in~\eqref{eq:rho-i} is
more convenient to characterize under the proposed algorithm than
the probabilities given in~\eqref{eq:underProb}.

The proof of Theorem~\ref{thm:main} relies on two sets of preliminary results.
Section~\ref{sec:deterministic} establishes upper bounds for the error when the algorithm
is applied to certain sets of functions, culminating in Corollary~\ref{cor:mainproperror}.
In Section~\ref{sec:probabilistic}, we bound the Wiener measure of these sets of functions,
leading to Corollary~\ref{cor:mainlowerboundprop}.
Section~\ref{sec:mainProof}
combines these results to prove Theorem~\ref{thm:main}.

\section{Numerical Results}\label{sec:numeric}
In this section we present numerical results of the proposed algorithm
for different values of the parameter $\lambda$.
Figure~\ref{fig:nle} shows the error $\Delta_n$ for each of three
independently generated Wiener paths using $\lambda=1$.

\begin{figure}[htp]
	\centering
	\includegraphics[width=0.9\textwidth]{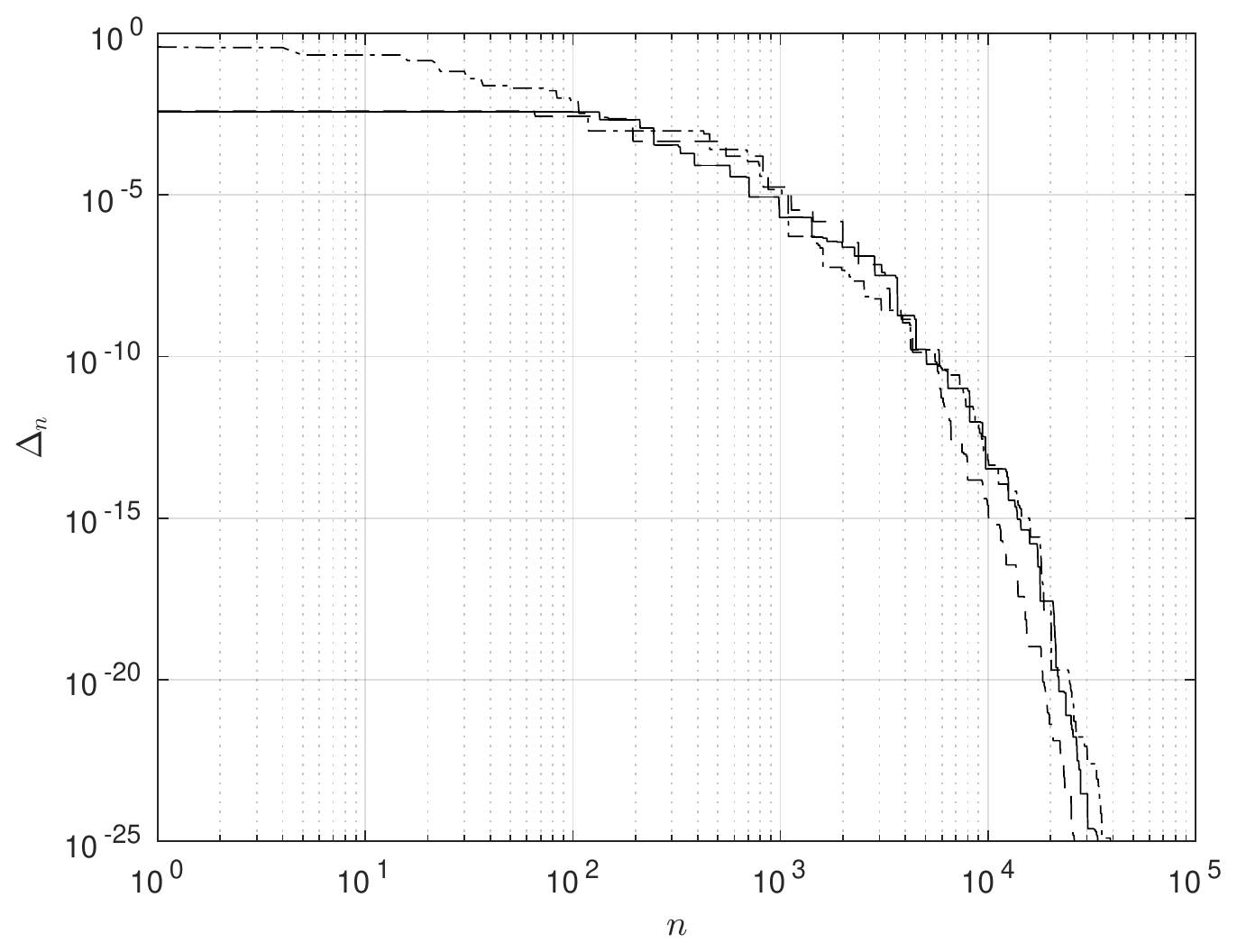}
	\caption{Errors for $3$ sample paths using $\lambda=1$.}
	\label{fig:nle}
\end{figure}

We also performed numerical experiments to estimate
$\left( \E{ \abs{ \Delta_{n,\lambda}(W) }^p } \right)^{1/p}$
using $10^3$ replications.
Figure~\ref{fig:p2} shows the results for $p=2$ and $\lambda\in\{1,4,8\}$.
We observe an exponential decay of the $L_2$ error for each value of $\lambda$.
Let us recall that Theorem~\ref{thm:main} and Remark~\ref{rem:choice-lambda}
only show that sufficiently large values of $\lambda$ ensure a ``high''
polynomial convergence rate of the $L_p$ error. However,
from a numerical point of view one might prefer choosing a small $\lambda$
since the numerically observed error in Figure~\ref{fig:p2} is increasing in~$\lambda$
for a fixed number of evaluation sites.
Let us mention that a small $\lambda$ corresponds to a small
offset $g(\tau_n)$ to the discrete minimum $M_n$ in~\eqref{eq:rho-i}.
Hence a small $\lambda$ results in a ``more local search''
around the discrete minimum.

\begin{figure}[htp]
	\centering
	\includegraphics[width=0.9\textwidth]{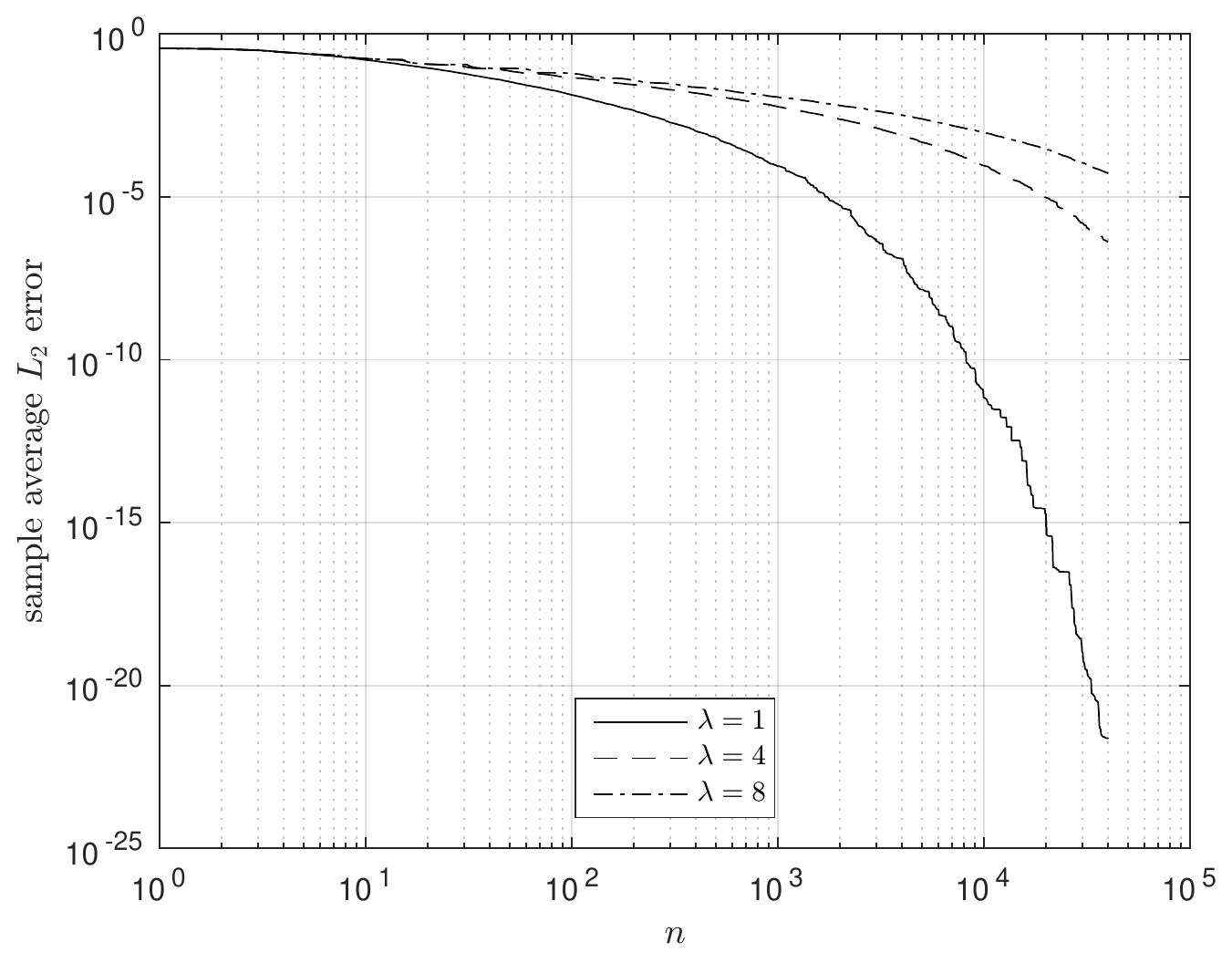}
	\caption{Sample $L_2$ error for various $\lambda$.}
	\label{fig:p2}
\end{figure}

\section{Non-probabilistic Arguments}\label{sec:deterministic}
In this section we will define a sequence of subsets of ``favorable"
functions for which we show that the error of the algorithm decreases
at an exponential rate.

\bigskip
First, let us mention some basic facts,
which will be frequently used in this paper.
Due to the bisection strategy, the lengths of all subintervals satisfy
\begin{align*}
	t^n_i-t^n_{i-1} \in\mathcal{A} = \{1/2^{k}:k\in\N\}
\end{align*}
for all $n\geq 2$ and $i\in\{1,\ldots,n\}$, and consequently
$\tau_n\in\mathcal{A}$ for all $n\geq 2$.
Let us stress that $g$ is non-decreasing on $\mathcal{A}$.
Furthermore, we have $\lim_{x\to0}g(x)=0$.

\bigskip
Let
\begin{align*}
	F = \{ f\colon [0,1]\to\R,\ f\text{ continuous with } f(0)=0 \}.
\end{align*}
Moreover, for $n\geq2$ and $\lambda\in{[1,\infty[}$ we define
\begin{align*}
	F_n=F_{\lambda,n} =\left\{ f\in F: \max_{1\leq k\leq n}\max_{1\leq i\leq k}
			\frac{ \abs{f(t^k_i)-f(t^k_{i-1})} }{ \sqrt{t^k_i-t^k_{i-1}} }
		\leq \sqrt{\lambda\log(n)/4} \right\}.
\end{align*}
The sets of ``favorable'' functions, defined in~\eqref{eq:favorable} below,
will be the intersection of several sets, including $F_n$, which depend
on the prespecified parameter $\lambda$ of the algorithm.
To simplify the notation, we will suppress the dependence of these
sets on $\lambda$ after their definition.
Recall that most quantities defined above depend on $\lambda$, $n$, and $f$
simultaneously. However, we typically only highlight the dependence on $n$.
For instance, $\rho_i^n$ also depends on the corresponding function $f$ as well
as on the parameter $\lambda$.

In the following we present some properties of the algorithm applied to
functions $f\in F_{n}$, which will be frequently used in this paper.

\begin{lem}\label{lem:rho}
	For all $\lambda\geq 1$, $n\geq 2$, and $f\in F_{n}$ we have
	\begin{align*}
		\rho^n \leq \frac{2}{\lambda \log(1/\tau_n)}.
	\end{align*}
	In particular, $\rho^n\leq 2/(\lambda\log(n))$.
\end{lem}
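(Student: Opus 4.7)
\bigskip
\noindent\textbf{Proof plan for Lemma~\ref{lem:rho}.}

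The strategy is to isolate the maximum-achieving subinterval and exploit the simple observation that $f(t^n_i)\geq M_n$ for every $i$. Let $j\in\{1,\ldots,n\}$ be such that $\rho^n=\rho^n_j$, and set $\ell_j=t^n_j-t^n_{j-1}$. Since both $f(t^n_{j-1})-M_n\geq 0$ and $f(t^n_j)-M_n\geq 0$, each of the two factors in the denominator of~\eqref{eq:rho-i} is at least $g(\tau_n)$, so
\[
	\rho^n \;\leq\; \frac{\ell_j}{g(\tau_n)^2} \;=\; \frac{\ell_j}{\lambda\,\tau_n\log(1/\tau_n)}.
\]
This immediately settles the easy case $\ell_j\leq 2\tau_n$: one then obtains $\rho^n\leq 2\tau_n/g(\tau_n)^2 = 2/(\lambda\log(1/\tau_n))$, as required. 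Since every subinterval length lies in $\mathcal{A}=\{2^{-k}\}$, the only remaining case is $\ell_j\geq 4\tau_n$.

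For the remaining case the plan is to track the history of the interval $[t^n_{j-1},t^n_j]$ in the bisection tree. Let $k+1\leq n$ be the step at which this interval was created by bisecting its parent of length $2\ell_j$. At step $k$ the parent had the maximum value $\rho^k$, so in particular $\rho^k\geq \rho^k_I$ for the interval $I$ of length $\tau_k$ that was present then. Combined with the same trivial denominator bound applied at step $k$, this yields an inequality of the form $(f(t^n_{j-1})-M_k+g(\tau_k))(f(t^n_j)-M_k+g(\tau_k))\geq c\,\ell_j\,g(\tau_k)^2/\tau_k$ for an explicit constant $c$. The key algebraic step is then to transfer this bound from step $k$ to step $n$, using that $M_n\leq M_k$ and $g(\tau_n)\leq g(\tau_k)$ (the latter because $g$ is non-decreasing on $\mathcal{A}$) together with the factorwise inequality
\[
	\frac{f(t^n_i)-M_n+g(\tau_n)}{f(t^n_i)-M_k+g(\tau_k)} \;\geq\; \frac{g(\tau_n)}{g(\tau_k)}
\]
(which follows from $M_k-M_n\geq 0$ and $g(\tau_k)\geq g(\tau_n)$). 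Multiplying the two factor inequalities yields exactly the denominator lower bound $\ell_j\,g(\tau_n)^2/(2\tau_n)$ needed to obtain $\rho^n_j\leq 2/(\lambda\log(1/\tau_n))$.

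The main obstacle is that the bound derived at step $k$ is only as strong as the general inductive hypothesis allows; when $\tau_n$ is much smaller than $\tau_k$ (so the algorithm has refined the partition aggressively after $J$'s creation), the ratio $\tau_k/\tau_n$ appearing in the ``target'' lower bound must be controlled. This is where the $F_n$ condition enters: the $\sqrt{\lambda\log(n)/4}$ bound on adjacent increments restricts how quickly $M$ can fall between steps $k$ and $n$, in terms of which the loss in the comparison can be absorbed. Carrying this out cleanly likely requires induction on $n$, with the inductive hypothesis providing both the bound on $\rho^k$ and the relationship between $\tau_k$ and $\tau_n$ via the history of min-length bisections.

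The in-particular statement $\rho^n\leq 2/(\lambda\log n)$ then follows immediately from the main bound together with the observation that, after $n$ evaluations, the partition consists of $n$ subintervals of total length one, forcing $\tau_n\leq 1/n$ and hence $\log(1/\tau_n)\geq\log(n)$.
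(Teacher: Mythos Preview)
Your proposal is a plan rather than a proof, and the hard case has a genuine gap. The inequality you claim at step $k$,
\[
  (f(t^n_{j-1})-M_k+g(\tau_k))(f(t^n_j)-M_k+g(\tau_k))\;\geq\; c\,\ell_j\,\frac{g(\tau_k)^2}{\tau_k},
\]
does not follow from what you have. The induction hypothesis $\rho^k\leq 2/(\lambda\log(1/\tau_k))$ applied to the \emph{parent} (which is the interval being split at step $k$) gives a lower bound on the product of the two \emph{parent} endpoint factors. The child $[t^n_{j-1},t^n_j]$ shares only one endpoint with its parent; the other is the freshly revealed midpoint $t_{k+1}$, whose value $f(t_{k+1})$ you have not controlled. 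Your factorwise transfer inequality is fine where it applies, but it compares the same point at two different steps; it does not convert a parent-product bound into a child-product bound. You acknowledge that the $F_n$ condition must enter to control how far $f(t_{k+1})$ can fall, but you never actually carry out the estimate, and the sentence ``carrying this out cleanly likely requires induction'' signals that you know the argument is not closed. In short, the easy case $\ell_j\leq 2\tau_n$ is correct and clean; the hard case is still open.

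The paper's proof takes a different route that avoids tracking individual intervals backward. It fixes the \emph{last} step $m\leq n$ at which a smallest subinterval is split. At that moment $\rho^m\leq \tau_m/g(\tau_m)^2=1/(\lambda\log(1/\tau_m))$ trivially, and one step later $\rho^{m+1}\leq 2/(\lambda\log(1/\tau_{m+1}))$ by a direct computation (the factor $2$ absorbs the halving of $\tau$). For all subsequent steps $\tau$ stays equal to $\tau_n$, and the paper shows by forward induction that $\rho^{m+k+1}\leq\rho^{m+k}$: when the maximal interval is bisected, the $F_n$ bound on the midpoint increment, combined with the induction hypothesis $\rho^{m+k}\leq 2/(\lambda\log(1/\tau_n))$, forces each new child's $\rho$-value to be at most the parent's. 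So instead of bounding one interval via its birth, the paper proves a global monotonicity of $\rho^n$ from the last $\tau$-change onward. This is where the $F_n$ hypothesis is actually used, and it is the step your plan is missing.
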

\begin{proof}
	First, we observe that
	\begin{align}\label{eq:split-smallest}
		\rho^m \leq \frac{\tau_m}{g(\tau_m)^2}
			= \frac{1}{\lambda\log(1/\tau_m)},
	\end{align}
	whenever the algorithm is about to split a smallest subinterval at step $m\geq2$.
	In the following step $m+1$, we also clearly have
	\begin{align*}
		\rho^{m+1}_i &\leq \frac{\tau_{m+1}}{g(\tau_{m+1})^2}
			= \frac{1}{\lambda\log(1/\tau_{m+1})},
	\end{align*}
	if $i\in\{1,\ldots,m+1\}$ corresponds to one of the newly created
	smallest subintervals. If $j\in\{1,\ldots,m+1\}$ denotes
	a subinterval that has not been split at step $m$ we obtain
	\begin{align*}
		\rho^{m+1}_j &= \frac{ t^{m+1}_j-t^{m+1}_{j-1} }
			{ \left( f(t^{m+1}_{j-1})-M_{m+1}+g(\tau_{m+1}) \right)
				\left( f(t^{m+1}_{j})-M_{m+1}+g(\tau_{m+1}) \right)
			} \\
		&\leq \left( \frac{ g(\tau_{m}) }{ g(\tau_{m+1}) } \right)^2
			\cdot \frac{ t^{m+1}_j-t^{m+1}_{j-1} }
			{ \left( f(t^{m+1}_{j-1})-M_{m}+g(\tau_{m}) \right)
				\left( f(t^{m+1}_{j})-M_{m}+g(\tau_{m}) \right)
			} \\
		&\leq \left( \frac{ g(\tau_{m}) }{ g(\tau_{m+1}) } \right)^2
			\cdot \rho^m,
	\end{align*}
	since $g$ is non-decreasing on $\mathcal{A}$.
	Moreover, $\tau_{m+1}=\tau_m/2$ and \eqref{eq:split-smallest}
	imply
	\begin{align*}
		\rho^{m+1}_j \leq \frac{ 2\tau_{m+1}\log(1/\tau_{m}) }{ \tau_{m+1}\log(1/\tau_{m+1}) }
			\cdot \frac{1}{ \lambda\log(1/\tau_m) }
			= \frac{2}{ \lambda\log(1/\tau_{m+1}) }
	\end{align*}
	and thus
	\begin{align}\label{eq:after-split-smallest}
		\rho^{m+1} \leq \frac{2}{ \lambda\log(1/\tau_{m+1}) }.
	\end{align}

	Let $n\geq2$ be arbitrary and let $m\in\{2,\ldots,n\}$ be the last time that the algorithm
	was about to split a smallest subinterval, thus \eqref{eq:split-smallest} holds.
	Let us stress that $\tau_{m+k}=\tau_n$ for all $k\in\{1,\ldots,n-m\}$.
	We will show by induction that
	\begin{align}\label{eq:induction}
		\rho^{m+k} \leq \frac{2}{\lambda \log(1/\tau_{m+k})}
	\end{align}
	for all $k\in\{0,\ldots,n-m\}$.
	
	We consider the non-trivial case of $m<n-1$, and we assume that \eqref{eq:induction} holds for some
	${k\in\{1,\ldots,n-m-1\}}$. At iteration $m+k+1$, we suppose that the $i$-th subinterval was split
	at step $m+k$, thus $\rho^{m+k}=\rho_i^{m+k}$, and consequently $t_{m+k+1}=(t^{m+k}_{i-1}+t^{m+k}_i)/2$.
	Then we have
	\begin{align*}
		f(t_{m+k+1}) = \frac{f(t^{m+k}_{i-1})+f(t^{m+k}_i)}{2} + \delta ,
	\end{align*}
	for some $\delta\in\R$.
	Furthermore, $f\in F_{n}$ implies
	\begin{align*}
		\abs{\frac{f(t^{m+k}_{i-1})+f(t^{m+k}_i)}{2} + \delta - f(t^{m+k}_{i-1})}
			\leq \sqrt{T_i/2}\cdot \sqrt{\lambda\log(n)/4}
	\end{align*}
	and
	\begin{align*}
		\abs{\frac{f(t^{m+k}_{i-1})+f(t^{m+k}_i)}{2} + \delta - f(t^{m+k}_{i})}
			\leq \sqrt{T_i/2}\cdot \sqrt{\lambda\log(n)/4}
	\end{align*}
	for $T_i=t^{m+k}_i-t^{m+k}_{i-1}$. In particular, this yields
	\begin{align}\label{eq:delta-bound}
		\delta \geq \frac{ \abs{f(t^{m+k}_{i})-f(t^{m+k}_{i-1})} }{2}
			- \sqrt{T_i/2}\cdot \sqrt{\lambda\log(1/\tau_n)/4}
	\end{align}
	since $\tau_n\leq 1/n$. We obtain
	\begin{align*}
		\frac{ \rho^{m+k}_i }{ \rho^{m+k+1}_i }
			&= 2\cdot \frac{
				\left(f(t^{m+k}_{i-1})-M_{m+k+1}+g(\tau_n)\right)
				\left(f(t_{m+k+1})-M_{m+k+1}+g(\tau_n)\right)
				}{
				\left(f(t^{m+k}_{i-1})-M_{m+k}+g(\tau_n)\right)
				\left(f(t^{m+k}_{i})-M_{m+k}+g(\tau_n)\right)
				}\\
		&\geq \frac{
			2\left(f(t_{m+k+1})-M_{m+k+1}+g(\tau_n)\right)
			}{
			\left( f(t^{m+k}_{i})-M_{m+k}+g(\tau_n) \right)
			}.
	\end{align*}
	Moreover,  we have
	\begin{align*}
		2 & \left( f(t_{m+k+1})-M_{m+k+1}+g(\tau_n) \right) \\
		&\geq f(t^{m+k}_{i-1})+f(t^{m+k}_i) + \abs{f(t^{m+k}_{i})-f(t^{m+k}_{i-1})}
			- \sqrt{\lambda T_i\log(1/\tau_n)/2} - 2M_{m+k} + 2g(\tau_n) \\
		&= \left( f(t^{m+k}_{i-1})-M_{m+k}+g(\tau_n) \right)
			+ \left( f(t^{m+k}_i)-M_{m+k}+g(\tau_n) \right) \\
		&\quad+ \abs{ \left( f(t^{m+k}_i)-M_{m+k}+g(\tau_n) \right) - \left( f(t^{m+k}_{i-1})-M_{m+k}+g(\tau_n) \right) }
			- \sqrt{\lambda T_i\log(1/\tau_n)/2} 
	\end{align*}
	due to \eqref{eq:delta-bound}. Making the substitution
	\begin{align*}
		a = f(t^{m+k}_{i})-M_{m+k}+g(\tau_n) > 0
			\quad\wedge\quad
			b = f(t^{m+k}_{i-1})-M_{m+k}+g(\tau_n) > 0
	\end{align*}
	and $x=\sqrt{a/b}>0$ we get
	\begin{align*}
		\frac{ \rho^{m+k}_i }{ \rho^{m+k+1}_i } &\geq \frac{
				a + b +\abs{a-b}
				- \sqrt{T_i}\cdot \sqrt{\lambda\log(1/\tau_n)/2}
			}{a}\\
		&= \frac{
				x + \frac{1}{x} +\abs{x-\frac{1}{x}}
				- \sqrt{\rho_i^{m+k}}\cdot \sqrt{\lambda\log(1/\tau_n)/2}
			}{x}\\
		&\geq \frac{
				x + \frac{1}{x} + \abs{x-\frac{1}{x}}
				-1
			}{x}\\
		&\geq 1,
	\end{align*}
	where the second inequality holds by the induction hypothesis.
	Hence we get
	\begin{align*}
		{ \rho^{m+k+1}_i }
			\leq { \rho^{m+k}_i }.
	\end{align*}
	Similarly, we obtain
	$\rho^{m+k+1}_{i+1}\leq\rho^{m+k}_i$ and thus $\rho^{m+k+1}\leq\rho^{m+k}$.
\end{proof}

\begin{lem}\label{lem:AA}
	For all $\lambda\geq 1$, $n\geq 2$, and $f\in F_{n}$ we have
	\begin{align*}
		\max_{1\leq i\leq n} \frac{ t^n_i-t^n_{i-1} }{ (\min\{f(t^n_{i-1}),f(t^n_i)\}-M_n+g(\tau_n))^2 }
			\leq \frac{4}{\lambda\log(n)}.
	\end{align*}
\end{lem}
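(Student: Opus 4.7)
The plan is to re-express the desired quantity in terms of $\rho_i^n$ and the increment $|f(t_i^n)-f(t_{i-1}^n)|$, and then to combine Lemma~\ref{lem:rho} with the defining property of $F_n$ via a short case analysis.

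For $i\in\{1,\ldots,n\}$ set $L_i=t^n_i-t^n_{i-1}$, $a_i=f(t^n_{i-1})-M_n+g(\tau_n)$, and $b_i=f(t^n_i)-M_n+g(\tau_n)$. Since $M_n$ is the discrete minimum and $g(\tau_n)>0$, both $a_i$ and $b_i$ are strictly positive, and $\rho_i^n=L_i/(a_i b_i)$. Writing $u_i=\min\{a_i,b_i\}$ and $d_i=|a_i-b_i|=|f(t^n_i)-f(t^n_{i-1})|$, the target bound is equivalent to $u_i^{\,2}\geq L_i\lambda\log(n)/4$.

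I will first produce two inequalities. From Lemma~\ref{lem:rho} together with $\tau_n\leq 1/n$, one has $\rho_i^n\leq \rho^n\leq 2/(\lambda\log(n))$, which rearranges to
\begin{align*}
	a_i b_i \geq \frac{L_i\lambda\log(n)}{2}.
\end{align*}
From $f\in F_n$ applied at the step $k=n$, the increment bound yields
\begin{align*}
	d_i \leq \sqrt{L_i\lambda\log(n)/4}.
\end{align*}

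With these two inequalities in hand, I would split into cases according to whether $d_i\leq u_i$ or $d_i>u_i$. In the first case $a_i b_i = u_i(u_i+d_i)\leq 2u_i^{\,2}$, so $u_i^{\,2}\geq a_ib_i/2\geq L_i\lambda\log(n)/4$. In the second case $a_i b_i = u_i(u_i+d_i)\leq 2u_i d_i$, hence $u_i\geq a_ib_i/(2d_i)$; squaring and inserting the two inequalities above gives
\begin{align*}
	u_i^{\,2} \geq \frac{(a_ib_i)^2}{4d_i^{\,2}}
	\geq \frac{(L_i\lambda\log(n)/2)^2}{L_i\lambda\log(n)}
	= \frac{L_i\lambda\log(n)}{4}.
\end{align*}
In both cases we obtain $L_i/u_i^{\,2}\leq 4/(\lambda\log(n))$, and taking the maximum over $i$ finishes the proof.

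There is no serious obstacle here: once one recognizes that $\rho_i^n$ controls the product $a_i b_i$ while the $F_n$ condition controls the discrepancy $|a_i-b_i|$, the passage from $a_i b_i$ to $\min\{a_i,b_i\}^2$ is a one-line case split. The only subtlety is making sure that the factor $\log(1/\tau_n)$ from Lemma~\ref{lem:rho} can be downgraded to $\log(n)$, which is immediate because $\tau_n\leq 1/n$.
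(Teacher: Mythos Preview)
Your proof is correct and uses the same two ingredients as the paper---Lemma~\ref{lem:rho} to bound the product $a_ib_i$ from below and the $F_n$ condition to bound the difference $d_i=|a_i-b_i|$ from above. The only difference is in the final elementary step: the paper rewrites $L_i/u_i^{\,2}=\rho_i^n\,(1+d_i/u_i)$, takes the maximum over $i$, and solves the resulting quadratic inequality for $z_n=\max_i\sqrt{L_i}/u_i$ to get $z_n\leq 2/\sqrt{\lambda\log n}$; you instead handle each $i$ individually via the case split $d_i\lessgtr u_i$ on the factorization $a_ib_i=u_i(u_i+d_i)$. Both routes land on the same constant $4$, and your case split is arguably a bit cleaner since it avoids completing the square.
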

\begin{proof}
	Let $n\geq2$. At first, observe that
	\begin{align*}
		\frac{t^n_i-t^n_{i-1}}{ (\min\{f(t^n_{i-1}),f(t^n_i)\}-M_n+g(\tau_n))^2 }
			&= \rho_i^n \cdot \left(
					1+\frac{ \abs{f(t^n_i)-f(t^n_{i-1})} }{ \min\{f(t^n_{i-1}),f(t^n_i)\}-M_n+g(\tau_n) }
				\right)
	\end{align*}
	for $i\in\{1,\dots,n\}$, and thus
	\begin{align*}
		&\max_{1\leq i\leq n} \frac{ t^n_i-t^n_{i-1} }{ (\min\{f(t^n_{i-1}),f(t^n_i)\}-M_n+g(\tau_n))^2 } \\
		&\qquad\leq \rho^n \cdot \left(
				1+\max_{1\leq i\leq n} \frac{ \abs{f(t^n_i)-f(t^n_{i-1})} }{ \min\{f(t^n_{i-1}),f(t^n_i)\}-M_n+g(\tau_n) }
			\right) \\
		&\qquad= \rho^n \cdot \left(
				1+\max_{1\leq i\leq n} \frac{ \abs{f(t^n_i)-f(t^n_{i-1})}\cdot\sqrt{t^n_i-t^n_{i-1}} }
				{ \sqrt{t^n_i-t^n_{i-1}}\cdot \left(\min\{f(t^n_{i-1}),f(t^n_i)\}-M_n+g(\tau_n)\right) }
			\right) \\
		&\qquad\leq \rho^n \cdot \left(
				1+\max_{1\leq i\leq n} \frac{ \abs{f(t^n_i)-f(t^n_{i-1})} }{ \sqrt{t^n_i-t^n_{i-1}} }
				\cdot \max_{1\leq i\leq n} \frac{ \sqrt{t^n_i-t^n_{i-1}} }{ \min\{f(t^n_{i-1}),f(t^n_i)\}-M_n+g(\tau_n) }
			\right).
	\end{align*}
	Setting
	\begin{align*}
		z_n = \max_{1\leq i\leq n} \frac{ \sqrt{t^n_i-t^n_{i-1}} }{ \min\{f(t^n_{i-1}),f(t^n_i)\}-M_n+g(\tau_n) },
	\end{align*}
	the last inequality reads
	\begin{align*}
		z_n^2 \leq \rho^n \cdot \left(
				1 + z_n\cdot \max_{1\leq i\leq n} \frac{ \abs{f(t^n_i)-f(t^n_{i-1})} }{ \sqrt{t^n_i-t^n_{i-1}} }
			\right).
	\end{align*}
	Now use the fact that $z_n>0$, $f\in F_{n}$ and $\rho^n\leq 2/(\lambda\log(n))$
	from Lemma~\ref{lem:rho} to obtain
	\begin{align*}
		z_n^2 \leq \frac{2}{\lambda\log(n)}\cdot \left( 1 + z_n\cdot \sqrt{\lambda\log(n)/4} \right),
	\end{align*}
	or equivalently,
	\begin{align*}
		\left( z_n - 1/\sqrt{4\lambda\log(n)} \right)^2
			\leq \left(2+\frac 14\right) \cdot \frac{1}{\lambda\log(n)}.
	\end{align*}
	This implies that
	\begin{align*}
		z_n \leq \frac{1}{\sqrt{\lambda\log(n)}} \left( 1/2 + \sqrt{2+1/4} \right)
			= \frac{2}{\sqrt{\lambda\log(n)}}
	\end{align*}
	and hence $z_n^2\leq 4/(\lambda\log(n))$.
\end{proof}

\begin{lem}\label{lem:lower-bound}
	For all $\lambda\geq 1$, $n\geq 2$, and $f\in F_{n}$ we have
	\begin{align*}
		\rho^k \geq \frac{2}{ 3\lambda\log(1/\tau_n) }
	\end{align*}
	for all $k\in\{2,\dots,n\}$.
\end{lem}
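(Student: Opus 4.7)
The plan is to exhibit, for each $k\in\{2,\ldots,n\}$, a single subinterval $I$ of the step-$k$ grid whose $\rho$-value already satisfies the claimed lower bound; since $\rho^k=\max_i \rho_i^k$, this suffices. I would take $I$ to be an interval adjacent to a point where $M_k$ is attained: pick $j^*\in\{0,\ldots,k\}$ with $f(t^k_{j^*})=M_k$, and let $[a,b]=[t^k_{j^*-1},t^k_{j^*}]$ if $j^*\geq 1$, and $[a,b]=[t^k_0,t^k_1]$ otherwise (in the latter case $f(a)=f(0)=0=M_k$). In either case $I=[a,b]$ is a genuine subinterval of the step-$k$ grid (using $k\geq 2$), one endpoint, say $a$, satisfies $f(a)=M_k$, and the length $L=b-a$ satisfies $L\geq\tau_k$.

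Then $\rho_I^k=L/\bigl(g(\tau_k)(f(b)-M_k+g(\tau_k))\bigr)$. By the defining bound of $F_n$ applied at step $k$, $f(b)-M_k=f(b)-f(a)\leq\sqrt{\lambda L\log(n)/4}$, so that
\[
\rho_I^k\geq\frac{L}{g(\tau_k)\bigl(g(\tau_k)+\sqrt{\lambda L\log(n)/4}\bigr)}.
\]
Substituting $g(\tau_k)^2=\lambda\tau_k\log(1/\tau_k)$ and clearing denominators, the target $\rho_I^k\geq 2/(3\lambda\log(1/\tau_n))$ reduces to
\[
3L\log(1/\tau_n)\geq 2\tau_k\log(1/\tau_k)+\sqrt{\tau_k L\log(1/\tau_k)\log(n)},
\]
which follows from three elementary bounds: $\tau_k\leq L$ (minimality of $\tau_k$), $\log(1/\tau_k)\leq\log(1/\tau_n)$ (from $k\leq n$ and non-increase of $k\mapsto\tau_k$), and $\log(n)\leq\log(1/\tau_n)$ (from the pigeonhole bound $\tau_n\leq 1/n$).

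The main subtlety is the choice of $I$. A generic smallest subinterval would not work, because $f(a)-M_k$ is not controlled by the $F_n$-condition in general. Adjacency of $I$ to a minimum-attaining grid point is precisely what collapses one factor in the denominator of $\rho_I^k$ to $g(\tau_k)$ and leaves only the single grid-step oscillation $f(b)-f(a)$ to be bounded via $f\in F_n$.
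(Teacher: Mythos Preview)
Your proof is correct and follows essentially the same approach as the paper: choose a step-$k$ subinterval that has an endpoint attaining $M_k$, so one factor in the denominator of $\rho_I^k$ collapses to $g(\tau_k)$, bound the remaining factor via the $F_n$ oscillation bound, and finish with $\tau_k\le L$, $\log(1/\tau_k)\le\log(1/\tau_n)$, and $\log(n)\le\log(1/\tau_n)$. The paper carries out the same estimate as a direct chain of inequalities rather than reducing to your displayed algebraic inequality, but the ingredients and the final constant $2/3$ are identical. (One cosmetic point: when $j^*\ge 1$ your choice $[a,b]=[t^k_{j^*-1},t^k_{j^*}]$ has $f(b)=M_k$, not $f(a)=M_k$; since $\rho_I^k$ is symmetric in the two endpoint values this is harmless, but you may want to phrase the ``say $a$'' relabeling more explicitly.)
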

\begin{proof}
	Let $n\geq2$, $k\in\{2,\dots,n\}$ and $f\in F_{n}$. Moreover, let $i\in\{1,\dots,k\}$ be an index with
	$M_k\in\{f(t^k_{i-1}),f(t^k_i)\}$ and $T_i=t^k_i-t^k_{i-1}$. Then we have
	\begin{align*}
		\rho^k \geq \rho^k_i
			&= \frac{ T_i }{ g(\tau_k)\cdot\left( \max\{f(t^k_{i-1}),f(t^k_i)\}-\min\{f(t^k_{i-1}),f(t^k_i)\}+g(\tau_k) \right) } \\
		&= \frac{1}{ \sqrt{ \lambda\log(1/\tau_k)\cdot(\tau_k/T_i) }
				\cdot\left( \abs{f(t^k_i)-f(t^k_{i-1})}/\sqrt{T_i} + \sqrt{ \lambda\log(1/\tau_k)\cdot(\tau_k/T_i) } \right)
			} \\
		&\geq \frac{1}{ \sqrt{\lambda \log(1/\tau_k)}
			\left( \sqrt{\lambda\log(n)/4} + \sqrt{\lambda
			\log(1/\tau_k)} \right) }\\
	&\geq \frac{1}{ \sqrt{\lambda \log(1/\tau_n)}
			\left( \sqrt{\lambda\log(1/\tau_n)/4} + \sqrt{\lambda \log(1/\tau_n)} \right) } \\
		&\geq \frac{2}{ 3\lambda\log(1/\tau_n)
		},
	\end{align*}
	since $\tau_n\leq 1/n$ and $\tau_n\leq\tau_k$.
\end{proof}

\subsection{Upper Bound on \texorpdfstring{$\sum_{i=1}^n \rho^n_i$}{Rho}}
For $\lambda\in{[1,\infty[}$, $n\in\N$, and $f\in F$ we
consider the linear interpolation $L_n$ of the $\{f(t^n_i)\}$ defined by
\begin{align}\label{eq:lin-int}
	L_n(s) = \frac{t^n_i-s}{t^n_i-t^n_{i-1}} \cdot f(t^n_{i-1})
		+ \frac{s-t^n_{i-1}}{t^n_i-t^n_{i-1}} \cdot f(t^n_{i}),
		\quad s\in [t^n_{i-1}, t^n_i],\quad 1\leq i\leq n.
\end{align}

\begin{rem}
	A simple computation shows that
	\begin{align*}
		\int_s^t \frac{1}{(h(x))^2}\dx
			=\frac{t-s}{h(s)\cdot h(t)}
	\end{align*}
	for $s<t$ and $h\colon [s,t]\to\R$ affine linear with $h(s),h(t)>0$.
	This yields
	\begin{align*}
		\rho^n_i = \int_{t^n_{i-1}}^{t^n_i} \frac{1}{(L_n(s)-M_n+g(\tau_n))^2} \ds
	\end{align*}
	and hence
	\begin{align}\label{eq:rho-sum}
		\sum_{i=1}^n \rho^n_i
			= \int_{0}^1 \frac{1}{(L_n(t)-M_n+g(\tau_n))^2} \dt
	\end{align}
	for $f\in F$, $n\geq 2$, and $\lambda\geq 1$.
\end{rem}

Replacing the discrete minimum by the global minimum in \eqref{eq:rho-sum} clearly yields the lower bound
\begin{align*}
	\int_{0}^1 \frac{1}{(L_n(t)-M+g(\tau_n))^2} \dt
		\leq \sum_{i=1}^n \rho^n_i.
\end{align*}
In the following we provide an upper bound of similar structure.

For $n\geq2$ and $\lambda\in{[1,\infty[}$ we define
\begin{align*}
	G_{1,n} = G_{\lambda,1,n}
		= \{ f\in F:\,\Delta_n\leq g(\tau_n) \}
\end{align*}
and
\begin{align*}
	G_{{1}/{2},n} = G_{\lambda,1/2,n}
		= \left\{ f\in F:\,\Delta_n\leq \frac12\,g(\tau_n) \right\}.
\end{align*}
We clearly have $G_{{1}/{2},n}\subseteq G_{1,n}$.

\begin{lem}\label{lem:upper-bound-1det}
	For all $\lambda\geq 1$, $n\geq 2$, and $f\in G_{{1}/{2},n}$ we have
	\begin{align*}
			\sum_{i=1}^n \rho^n_i \leq 4\cdot\int_{0}^1 \frac{1}{(L_n(t)-M+g(\tau_n))^2} \dt.
	\end{align*}
\end{lem}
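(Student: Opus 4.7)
The plan is to reduce everything to a pointwise comparison of the two integrands appearing in the identity for $\sum_{i=1}^n \rho^n_i$.

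First, I would invoke the identity from the preceding remark, namely
\begin{align*}
    \sum_{i=1}^n \rho^n_i = \int_{0}^1 \frac{1}{(L_n(t)-M_n+g(\tau_n))^2} \dt,
\end{align*}
so it suffices to prove the pointwise inequality
\begin{align*}
    \frac{1}{(L_n(t)-M_n+g(\tau_n))^2} \leq \frac{4}{(L_n(t)-M+g(\tau_n))^2}
\end{align*}
for every $t\in[0,1]$, or equivalently $L_n(t)-M+g(\tau_n) \leq 2\,(L_n(t)-M_n+g(\tau_n))$.

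Next I would use that $L_n$ is the piecewise linear interpolation of the values $\{f(t^n_i)\}$, each of which is at least $M_n$ by definition. Since a linear function on $[t^n_{i-1},t^n_i]$ is bounded below by the minimum of its endpoint values, this gives $L_n(t)\geq M_n$ on the whole interval $[0,1]$, and hence
\begin{align*}
    L_n(t)-M_n+g(\tau_n) \geq g(\tau_n) > 0.
\end{align*}
Writing $L_n(t)-M+g(\tau_n) = (L_n(t)-M_n+g(\tau_n)) + \Delta_n$ and using the hypothesis $f\in G_{1/2,n}$, which gives $\Delta_n \leq \tfrac12 g(\tau_n) \leq \tfrac12(L_n(t)-M_n+g(\tau_n))$, I obtain
\begin{align*}
    L_n(t)-M+g(\tau_n) \leq \tfrac{3}{2}\,(L_n(t)-M_n+g(\tau_n)) \leq 2\,(L_n(t)-M_n+g(\tau_n)),
\end{align*}
which is the desired pointwise inequality. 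Integrating over $[0,1]$ and combining with the identity above yields the claim.

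There is no real obstacle here; the only subtlety is the observation that $L_n(t)\geq M_n$ everywhere on $[0,1]$, which makes the offset $g(\tau_n)$ a genuine positive lower bound for $L_n(t)-M_n+g(\tau_n)$ and lets $\Delta_n\leq \tfrac12 g(\tau_n)$ translate into a multiplicative comparison of the two denominators.
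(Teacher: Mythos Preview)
Your proof is correct and follows essentially the same approach as the paper: invoke the integral identity \eqref{eq:rho-sum}, then establish a pointwise comparison of the two denominators using $L_n(t)\geq M_n$ together with the hypothesis $\Delta_n\leq\tfrac12 g(\tau_n)$. The paper organizes the algebra slightly differently (writing $M_n=M+\Delta_n$ inside the integrand and bounding in two steps via $L_n(t)-M+\tfrac12 g(\tau_n)$), but the content is the same.
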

\begin{proof}
	Using \eqref{eq:rho-sum} we obtain
	\begin{align*}
		\sum_{i=1}^n \rho^n_i
			&= \int_{0}^1 \frac{1}{(L_n(t)-M+g(\tau_n)-\Delta_n)^2} \dt \\
		&\leq \int_{0}^1 \frac{1}{\left( L_n(t) - M +
		\frac{1}{2}\,g(\tau_n) \right)^2} \dt \\
		&\leq \frac{1}{\left(1/2\right)^2} \cdot
			\int_{0}^1 \frac{1}{(L_n(t)-M+g(\tau_n))^2} \dt.
	\end{align*}
\end{proof}

In the next step we bound $\sum_{i=1}^n\rho^n_i$ in terms of
\begin{align*}
	\int_{0}^1 \frac{1}{(f(t)-M+g(\tau_n))^2} \dt.
\end{align*}

For $n\geq 2$ and $\lambda\in{[1,\infty[}$ we define
\begin{align}\label{eq:deff+}
	J_{n}^+ = 
	J_{\lambda,n}^+ = \left\{
	f\in F:\,
			\max_{1\leq i\leq n}\, \sup_{s\in[t^n_{i-1},t^n_i]}
			\frac{f(s)-L_n(s)}{\sqrt{t^n_i-t^n_{i-1}}} \leq \frac 12\sqrt{\lambda\log(n)/4}
		\right\}
\end{align}
and
\begin{align}\label{eq:deff-}
	J_{n}^- = 
	J_{\lambda,n}^- = \left\{
	f\in F:\,
			\min_{1\leq i\leq n}\, \inf_{s\in[t^n_{i-1},t^n_i]}
			\frac{f(s)-L_n(s)}{\sqrt{t^n_i-t^n_{i-1}}} \geq -\frac 12\sqrt{\lambda\log(n)/4}
		\right\}.
\end{align}

\begin{lem}\label{lem:upper-bound-2det}
	For all $\lambda\geq 1$, $n\geq 2$, and $f\in F_{n}\cap J_{n}^+\cap J_{n}^-$ we have
	\begin{align*}
		\int_{0}^1 \frac{1}{(L_n(t)-M+g(\tau_n))^2} \dt
			\leq \frac94\cdot\int_{0}^1 \frac{1}{(f(t)-M+g(\tau_n))^2} \dt.
	\end{align*}
\end{lem}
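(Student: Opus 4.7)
The plan is to establish a pointwise comparison between $f(t) - M + g(\tau_n)$ and $L_n(t) - M + g(\tau_n)$, and then integrate. Specifically, I aim to show that on every interval $[t^n_{i-1}, t^n_i]$ one has
\begin{align*}
	\tfrac{1}{2}\bigl(L_n(t)-M+g(\tau_n)\bigr) \le f(t)-M+g(\tau_n) \le \tfrac{3}{2}\bigl(L_n(t)-M+g(\tau_n)\bigr),
\end{align*}
which immediately yields $(f(t)-M+g(\tau_n))^2 \le \tfrac{9}{4}(L_n(t)-M+g(\tau_n))^2$ and hence the claim after inverting and integrating.

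The upper bound on the deviation $|f(t) - L_n(t)|$ comes for free from membership in $J_n^+\cap J_n^-$: for $t\in[t^n_{i-1},t^n_i]$,
\begin{align*}
	\abs{f(t)-L_n(t)} \le \tfrac{1}{2}\sqrt{\lambda\log(n)/4}\cdot\sqrt{t^n_i-t^n_{i-1}}
	= \tfrac{1}{4}\sqrt{\lambda\log(n)(t^n_i-t^n_{i-1})}.
\end{align*}
For the lower bound on $L_n(t)-M+g(\tau_n)$, I use that $L_n(t) \ge \min\{f(t^n_{i-1}),f(t^n_i)\}$ on $[t^n_{i-1},t^n_i]$ and $M \le M_n$, so
\begin{align*}
	L_n(t)-M+g(\tau_n) \ge \min\{f(t^n_{i-1}),f(t^n_i)\}-M_n+g(\tau_n).
\end{align*}
Since $f\in F_n$, Lemma~\ref{lem:AA} applies and gives $(\min\{f(t^n_{i-1}),f(t^n_i)\}-M_n+g(\tau_n))^2 \ge \tfrac{\lambda\log(n)(t^n_i-t^n_{i-1})}{4}$, i.e., $L_n(t)-M+g(\tau_n) \ge \tfrac{1}{2}\sqrt{\lambda\log(n)(t^n_i-t^n_{i-1})}$.

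Combining these, $\abs{f(t)-L_n(t)} \le \tfrac{1}{2}(L_n(t)-M+g(\tau_n))$, so writing $f(t)-M+g(\tau_n) = (L_n(t)-M+g(\tau_n)) + (f(t)-L_n(t))$ yields the two-sided inequality above, with both quantities strictly positive. Squaring and integrating over $[0,1]$ then completes the proof. The only real work is recognising that Lemma~\ref{lem:AA} provides exactly the lower bound on $L_n(t)-M+g(\tau_n)$ that matches (up to a factor of $2$) the upper bound on $\abs{f(t)-L_n(t)}$ supplied by $J_n^+\cap J_n^-$; once the matching scales $\sqrt{\lambda\log(n)(t^n_i-t^n_{i-1})}$ are in place, the constants $\tfrac{1}{2}$ and $\tfrac{3}{2}$ (hence $\tfrac{9}{4}$) fall out mechanically.
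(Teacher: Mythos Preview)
Your proposal is correct and follows essentially the same approach as the paper. Both arguments use the membership in $J_n^+\cap J_n^-$ to bound $\abs{f(t)-L_n(t)}$ by $\tfrac12\sqrt{\lambda\log(n)/4}\cdot\sqrt{t^n_i-t^n_{i-1}}$, invoke Lemma~\ref{lem:AA} (together with $L_n(t)\ge\min\{f(t^n_{i-1}),f(t^n_i)\}$ and $M\le M_n$) to obtain $L_n(t)-M+g(\tau_n)\ge\tfrac12\sqrt{\lambda\log(n)(t^n_i-t^n_{i-1})}$, and conclude that $\abs{f(t)-L_n(t)}\le\tfrac12\bigl(L_n(t)-M+g(\tau_n)\bigr)$; the paper writes this as a ratio bound and factors the integrand, while you state it as a two-sided pointwise inequality, but the content and the resulting constant $9/4$ are identical.
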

\begin{proof}
	Let $i\in\{1,\dots,n\}$. We have for all $s\in[t_{i-1}^n,t_i^n]$ that
	\begin{align*}
		\frac{ \abs{f(s)-L_n(s)} }{ L_n(s)-M+g(\tau_n) }
			&\leq \frac{ \abs{f(s)-L_n(s)} }{ \sqrt{t^n_i-t^n_{i-1}} }
				\cdot \frac{ \sqrt{t^n_i-t^n_{i-1}} }{ \min\{f(t^n_{i-1}),f(t^n_i)\}-M_n+g(\tau_n) } \\
		&\leq \frac 12\sqrt{\lambda\log(n)/4} \cdot \frac{2}{\sqrt{\lambda\log(n)}} \\
		&= \frac12
	\end{align*}
	due to Lemma~\ref{lem:AA}. This yields
	\begin{align*}
		\int_{0}^1 \frac{1}{ (f(t)-M+g(\tau_n))^2 } \dt
			&= \int_{0}^1 \frac{1}{ (L_n(t)-M+g(\tau_n))^2 \cdot \left(
					1 + \frac{ f(t)-L_n(t) }{ L_n(t)-M+g(\tau_n) }
				\right)^2
			} \dt \\
		&\geq \frac{4}{9} \cdot \int_{0}^1 \frac{1}{(L_n(t)-M+g(\tau_n))^2} \dt.
	\end{align*}
\end{proof}

For $\lambda\in{[1,\infty[}$, $n\geq 2$, and $C>0$ we define
\begin{align}\label{eq:defhcn}
	H_{C,n} = H_{\lambda,C,n}
		= \left\{ f\in F:\,\int_0^1\frac{1}{(f(t)-M+g(\tau_n))^2}\dt
			\leq C\cdot \left(\log\left({1}/{g(\tau_n)}\right)\right)^4
		\right\}.
\end{align}
Note that $H_{C_1,n}\subseteq H_{C_2,n}$, if $0<C_1\leq C_2$. 

\begin{cor}\label{cor:lowerbounddet}
	For all $\lambda\geq 1$, $n\geq 2$, $C>0$, and
	$f\in F_{n}\cap J_{n}^+\cap J_{n}^-\cap G_{{1}/{2},n}\cap H_{C,n}$ we have
	\begin{align*}
		\sum_{i=1}^n \rho^n_i
			\leq 9\cdot C\cdot \left(\log\left({1}/{g(\tau_n)}\right)\right)^4.
	\end{align*}
\end{cor}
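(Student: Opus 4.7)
The corollary is essentially a direct concatenation of the three previous ingredients, so my plan is to chain them in order without any new analytic work.

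First, since $f\in G_{1/2,n}$, I would invoke Lemma~\ref{lem:upper-bound-1det} to pass from the discrete sum to an integral involving the linear interpolation, obtaining
\begin{align*}
	\sum_{i=1}^n \rho^n_i \leq 4\cdot\int_{0}^1 \frac{1}{(L_n(t)-M+g(\tau_n))^2}\dt.
\end{align*}
Second, using that $f\in F_n\cap J_n^+\cap J_n^-$, I would apply Lemma~\ref{lem:upper-bound-2det} to replace $L_n$ by $f$ inside the integral, at the cost of a factor $9/4$:
\begin{align*}
	\int_{0}^1 \frac{1}{(L_n(t)-M+g(\tau_n))^2}\dt
		\leq \frac{9}{4}\cdot\int_{0}^1 \frac{1}{(f(t)-M+g(\tau_n))^2}\dt.
\end{align*}
Finally, the defining property of $H_{C,n}$ (from~\eqref{eq:defhcn}) bounds the last integral by $C\cdot(\log(1/g(\tau_n)))^4$.

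Combining the three estimates gives the constant $4\cdot(9/4)\cdot C=9C$, which is exactly the claimed bound. There is no real obstacle here: the work has already been absorbed into Lemmas~\ref{lem:upper-bound-1det} and~\ref{lem:upper-bound-2det} (and ultimately into Lemma~\ref{lem:AA}, which was what made the interpolation-to-function step controllable). The only thing to verify is that the intersection of sets in the hypothesis supplies each lemma with exactly the membership condition it requires, which it does by construction.
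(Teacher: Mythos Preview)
Your proof is correct and follows exactly the same route as the paper: apply Lemma~\ref{lem:upper-bound-1det}, then Lemma~\ref{lem:upper-bound-2det}, then the defining inequality~\eqref{eq:defhcn} of $H_{C,n}$, and multiply the constants $4\cdot\tfrac{9}{4}=9$.
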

\begin{proof}
	This follows directly from Lemma~\ref{lem:upper-bound-1det},
	Lemma~\ref{lem:upper-bound-2det}, and \eqref{eq:defhcn}.
\end{proof}

\subsection{Lower Bound on \texorpdfstring{$\sum_{i=1}^n \rho^n_i$}{Rho}}
By $\lceil \cdot \rceil$ we denote the ceiling function, e.g., $\lceil 2\rceil=2$ and $\lceil 5/2\rceil=3$.

\begin{prop}\label{prop:upperbounddet}
	For all $\lambda\geq 1$, $n\geq 4$, and
	$f\in\bigcap_{k=\lceil n/2\rceil}^n \left(G_{1,k}\cap F_{k}\right)$
	we have
	\begin{align*}
		\sum_{i=1}^n \rho^n_i \geq \frac{1}{288}\cdot\frac{n}{ \lambda\log(1/\tau_n) }.
	\end{align*}
\end{prop}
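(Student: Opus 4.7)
The plan is to locate, for each step $k \in \{\lceil n/2\rceil, \ldots, n-1\}$ at which the algorithm bisects, a ``fresh'' contribution to $\sum_{i=1}^n \rho_i^n$ of size at least $c/(\lambda \log(1/\tau_n))$ for some universal constant $c$. Since there are $n - \lceil n/2\rceil \geq (n-1)/2$ such bisection steps, summing these contributions yields the stated lower bound up to a numerical constant.

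For each such $k$, Lemma~\ref{lem:lower-bound} applied with its ``$n$'' replaced by $k$ (which is legitimate because $f \in F_k$, and using $\tau_k \geq \tau_n$) gives $\rho^k \geq \frac{2}{3\lambda\log(1/\tau_n)}$, so the interval $I_k = [t^k_{j_k-1}, t^k_{j_k}]$ that the algorithm bisects at step $k$ carries a uniformly bounded $\rho^k$-value. I would then invoke the integral identity
\begin{align*}
\sum_{i=1}^n \rho_i^n = \int_0^1 \frac{ds}{(L_n(s) - M_n + g(\tau_n))^2}
\end{align*}
from the Remark preceding Lemma~\ref{lem:upper-bound-1det} to express the total step-$n$ contribution of $I_k$'s descendants as $\int_{I_k} \frac{ds}{(L_n(s) - M_n + g(\tau_n))^2}$. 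The key estimate is that this integral is at least a fixed constant fraction of $\rho^k$. I expect to prove this in two substeps: (i) use the explicit formulas for $\rho^{k+1}_{j_k}$ and $\rho^{k+1}_{j_k+1}$ in terms of $f(a), f(m), f(b)$ with $m = (a+b)/2$, together with the $F_k$-bound on $|f(m) - (f(a)+f(b))/2|$, to show $\rho^{k+1}_{j_k} + \rho^{k+1}_{j_k+1} \geq c_0 \rho^k$ for a universal $c_0$; (ii) propagate from step $k+1$ to step $n$ using $G_{1,\ell}$ for $\ell$ in the range (which bounds $M_\ell - M \leq g(\tau_\ell)$) together with the monotonicity ideas from the proof of Lemma~\ref{lem:rho}, in order to show that $\int_{I_k}\frac{ds}{(L_\ell(s) - M_\ell + g(\tau_\ell))^2}$ can only lose a bounded multiplicative factor as $\ell$ ranges from $k+1$ to $n$.

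The main obstacle I anticipate is careful bookkeeping to avoid double-counting, since the intervals $I_k$ for different values of $k$ may be nested rather than disjoint. The cleanest way forward is to restrict attention to those children of each $I_k$ that are not subsequently bisected at any step in $\{k+1, \ldots, n-1\}$, and to verify combinatorially that at least $n - \lceil n/2\rceil$ such ``fresh, unsplit'' intervals survive to step $n$ (out of the $2(n - \lceil n/2\rceil)$ children created in the range, at most $n - \lceil n/2\rceil$ can be further bisected). Each such surviving child is supported inside a distinct $I_k$ and, by the estimates above, contributes at least $c_0/(\lambda \log(1/\tau_n))$ to $\sum_{i=1}^n \rho_i^n$; summing and tracking numerical factors produces the claimed constant $\tfrac{1}{288}$.
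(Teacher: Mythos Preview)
Your outline follows the same route as the paper: isolate the step-$n$ subintervals that were created (and not further split) during iterations $\lceil n/2\rceil,\ldots,n-1$, bound each one's $\rho^n$-value from below by a universal constant times $\rho^k$ at the creation step $k$, and then invoke Lemma~\ref{lem:lower-bound}. The integral identity is a detour you eventually abandon; the paper does not use it here.

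Two places need tightening. First, your substep (i) targets the \emph{sum} $\rho^{k+1}_{j_k}+\rho^{k+1}_{j_k+1}\geq c_0\,\rho^k$, but your final counting uses \emph{individual} surviving children. When only one of the two children survives to step $n$, a sum bound gives no control over that child's contribution (all of the mass could sit in the sibling that is later bisected). The paper instead proves $\rho^{k+1}_{j}\geq\tfrac{1}{24}\,\rho^{k}$ for \emph{either} child, via Lemma~\ref{lem:AA} at step $k$, the $F_{k+1}$-increment bound on the midpoint value, and $G_{1,k}$ to control $M_k-M_{k+1}$. Second, substep (ii) is simpler than step-by-step propagation through all $G_{1,\ell}$ and the mechanics of Lemma~\ref{lem:rho}: since a surviving child keeps the same endpoints from step $k+1$ to step $n$, only $M$ and $g(\tau)$ change in its $\rho$-value, and a single application of $G_{1,k+1}$ (giving $M_{k+1}-M_n\leq M_{k+1}-M\leq g(\tau_{k+1})$) together with $g(\tau_n)\leq g(\tau_{k+1})$ yields $\rho^n_i\geq\tfrac14\,\rho^{k+1}_j$. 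Combining gives $\rho^n_i\geq\tfrac{1}{144\,\lambda\log(1/\tau_n)}$ for each of the at least $n/2$ surviving children, which is exactly the paper's computation.
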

\begin{proof}
	At iteration $n$, there are $n$ subintervals
	\begin{align*}
	[t^n_0,t^n_1], [t^n_1,t^n_2],\ldots ,[t^n_{n-1},t^n_n].
	\end{align*}
	At first, we observe that at least $\lceil n/2\rceil$ of these subintervals resulted from the iterations
	$\lceil n/2\rceil,\ldots,n-1$. Now suppose that such a subinterval, say $\left[t^n_{i-1},t^n_i\right]$ for $i\in\{1,\dots,n\}$,
	resulted from the split of the interval $\left[t^{k_i}_{j_i-1},t^{k_i}_{j_i}\right]$ at the $k_i$-th iteration
	of the algorithm, i.e.,
	\begin{align*}
		\rho^{k_i}=\rho^{k_i}_{j_i} \quad\wedge\quad
			t_{k_i+1} = t^{k_i+1}_{j_i} = \left( t^{k_i}_{j_i-1}+t^{k_i}_{j_i} \right)/2
	\end{align*}
	with $k_i\in\{\lceil n/2\rceil,\ldots,n-1\}$ and $j_i\in\{1,\ldots,k_i\}$. Thus we have
	\begin{align*}
		\left[t^n_{i-1},t^n_i\right] = \left[ t^{k_i}_{j_i-1},t_{k_i+1} \right] = \left[ t^{k_i+1}_{j_i-1},t^{k_i+1}_{j_i} \right]
			\quad\vee\quad
			\left[t^n_{i-1},t^n_i\right] = \left[ t_{k_i+1},t^{k_i}_{j_i} \right] = \left[ t^{k_i+1}_{j_i},t^{k_i+1}_{j_i+1} \right],
	\end{align*}
	as depicted in Figure~\ref{fig:pic}.
	
\begin{figure}[htp]
	\centering
	\includegraphics[width=0.9\linewidth]{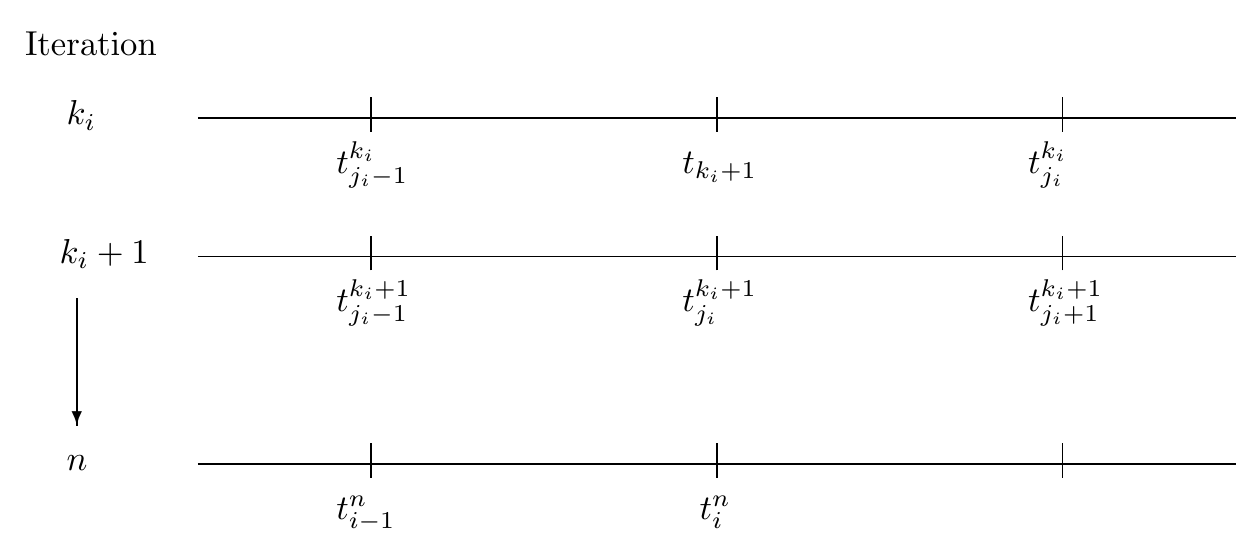}
	\caption{Situation from the proof of Proposition~\ref{prop:upperbounddet}.}
	\label{fig:pic}
\end{figure}

	Without loss of generality we may consider the case where
	$\left[t^n_{i-1},t^n_i\right]=\left[t^{k_i+1}_{j_i-1},t^{k_i+1}_{j_i}\right]$ is given by the left child.
	First we show
	\begin{align}\label{eq:firstinproof}
		\rho^{k_i+1}_{j_i} \geq \frac{1}{24}\,\rho^{k_i}.
	\end{align}
	For this we may without loss of generality assume
	that $f\left( t^{k_i}_{j_i-1} \right) \geq f\left( t^{k_i}_{j_i}
	\right)$ and hence $\rho^{k_i+1}_{j_i} \le \rho^{k_i+1}_{j_i+1}$.
	We have
	\begin{align*}
		\rho^{k_i+1}_{j_i} &= \frac{1}{2} \cdot
			\frac{ t^{k_i}_{j_i} - t^{k_i}_{{j_i}-1} }{
				\left( f(t^{k_i}_{{j_i}-1})-M_{k_i+1}+g(\tau_{k_i+1}) \right)
				\left( f(t_{k_i+1})-M_{k_i+1}+g(\tau_{k_i+1}) \right)
			} \\
		&= \frac{1}{2}\,\rho^{k_i} \cdot
			\frac{
				\left( f(t^{k_i}_{j_i-1})-M_{k_i}+g(\tau_{k_i}) \right)
				\left( f(t^{k_i}_{j_i})-M_{k_i}+g(\tau_{k_i}) \right)
			}{
				\left( f(t^{k_i}_{{j_i}-1})-M_{k_i+1}+g(\tau_{k_i+1}) \right)
				\left( f(t_{k_i+1})-M_{k_i+1}+g(\tau_{k_i+1}) \right)
			} \\
		&\geq \frac{1}{2}\,\rho^{k_i} \cdot
			\frac{
				\left( f(t^{k_i}_{j_i-1})-M_{k_i}+g(\tau_{k_i}) \right)
				\left( f(t^{k_i}_{j_i})-M_{k_i}+g(\tau_{k_i}) \right)
			}{
				\left( f(t^{k_i}_{{j_i}-1})-M_{k_i+1}+g(\tau_{k_i}) \right)
				\left( f(t_{k_i+1})-M_{k_i+1}+g(\tau_{k_i}) \right)
			},
	\end{align*}
	since $g$ is non-decreasing on $\mathcal{A}$.
	Moreover, $f\in F_{k_i}$ and $f(t^{k_i}_{j_i-1})\geq f(t^{k_i}_{j_i})$ imply
	\begin{align}\label{eq:prop3-1}
		1\leq \frac{ f(t^{k_i}_{j_i-1})-M_{k_i}+g(\tau_{k_i}) }
				{ f(t^{k_i}_{j_i})-M_{k_i}+g(\tau_{k_i}) }
			= 1 + \frac{ f(t^{k_i}_{j_i-1})-f(t^{k_i}_{j_i}) }
				{ \sqrt{ t^{k_i}_{j_i}-t^{k_i}_{j_i-1} } }
			\frac{ \sqrt{t^{k_i}_{j_i}-t^{k_i}_{j_i-1} } }
				{ f(t^{k_i}_{j_i})-M_{k_i}+g(\tau_{k_i}) }
			\leq 2
	\end{align}
	due to Lemma~\ref{lem:AA}. Analogously, $f\in F_{k_i+1}$ yields
	\begin{align*}
		f(t_{k_i+1}) \leq  f(t^{k_i}_{j_i-1})
			+ \sqrt{\lambda\log(k_i+1)/4} \cdot \sqrt{\left( t^{k_i}_{j_i}-t^{k_i}_{j_i-1} \right)/2}
	\end{align*}
	and thus ($n\geq4$ thus $k_i\geq 2$ and $\log(k_i+1)\leq2\log(k_i)$)
	\begin{align}\label{eq:prop3-2}
		\begin{aligned}[c]
			&\frac{ f(t_{k_i+1})-M_{k_i}+g(\tau_{k_i}) }{ f(t^{k_i}_{j_i})-M_{k_i}+g(\tau_{k_i}) } \\
				&\qquad\qquad\leq \frac{ f(t^{k_i}_{j_i-1})-M_{k_i}+g(\tau_{k_i}) }{ f(t^{k_i}_{j_i})-M_{k_i}+g(\tau_{k_i}) }
					+ \sqrt{\lambda\log(k_i)/4} \cdot
					\frac{ \sqrt{ t^{k_i}_{j_i}-t^{k_i}_{j_i-1} } }{ f(t^{k_i}_{j_i})-M_{k_i}+g(\tau_{k_i}) }
				\leq 3
		\end{aligned}
	\end{align}
	due to \eqref{eq:prop3-1} and Lemma~\ref{lem:AA}. Furthermore, $M_{k_i}-M\leq g(\tau_{k_i})$ shows
	\begin{align}\label{eq:prop3-3}
		0\leq \frac{ M_{k_i}-M_{k_i+1} }{ f(t^{k_i}_{j_i})-M_{k_i}+g(\tau_{k_i}) }
			\leq \frac{ M_{k_i}-M }{ g(\tau_{k_i}) }
			\leq 1.
	\end{align}
	Combining \eqref{eq:prop3-1}, \eqref{eq:prop3-2} and \eqref{eq:prop3-3} yields
	\begin{align*}
		&\frac{
				\left( f(t^{k_i}_{j_i-1})-M_{k_i}+g(\tau_{k_i}) \right)
				\left( f(t^{k_i}_{j_i})-M_{k_i}+g(\tau_{k_i}) \right)
			}{
				\left( f(t^{k_i}_{{j_i}-1})-M_{k_i+1}+g(\tau_{k_i}) \right)
				\left( f(t_{k_i+1})-M_{k_i+1}+g(\tau_{k_i}) \right)
			} \\
		&\qquad= \frac{
				\frac{ f(t^{k_i}_{j_i-1})-M_{k_i}+g(\tau_{k_i}) }
				{ f(t^{k_i}_{j_i})-M_{k_i}+g(\tau_{k_i}) }
			}{
				\left( \frac{ f(t^{k_i}_{{j_i}-1})-M_{k_i}+g(\tau_{k_i}) }{ f(t^{k_i}_{j_i})-M_{k_i}+g(\tau_{k_i}) }
					+ \frac{ M_{k_i}-M_{k_i+1} }{ f(t^{k_i}_{j_i})-M_{k_i}+g(\tau_{k_i}) }
				\right)
				\left( \frac{ f(t_{k_i+1})-M_{k_i}+g(\tau_{k_i}) }{ f(t^{k_i}_{j_i})-M_{k_i}+g(\tau_{k_i}) }
					+ \frac{ M_{k_i}-M_{k_i+1} }{ f(t^{k_i}_{j_i})-M_{k_i}+g(\tau_{k_i}) }
				\right)
			} \\
		&\qquad\geq \frac{1}{(2+1)\cdot(3+1)}
	\end{align*}
	and hence we get \eqref{eq:firstinproof}.
	
	Now, we exploit
	\begin{align*}
		\rho^n_i &= \rho^{k_i+1}_{j_i} \cdot
			\frac{ \left( f(t^{k_i}_{{j_i}-1})-M_{k_i+1}+g(\tau_{k_i+1}) \right)
				\left( f(t_{k_i+1})-M_{k_i+1}+g(\tau_{k_i+1}) \right)
			}{
				\left( f(t^{k_i}_{{j_i}-1})-M_{n}+g(\tau_{n}) \right)
				\left( f(t_{k_i+1})-M_{n}+g(\tau_{n}) \right)
			} \\
		&\geq \rho^{k_i+1}_{j_i} \cdot
			\frac{ \left( f(t^{k_i}_{{j_i}-1})-M_{k_i+1}+g(\tau_{k_i+1}) \right)
			}{
				\left( f(t^{k_i}_{{j_i}-1})-M_{k_i+1}+g(\tau_{k_i+1}) + \left(M_{k_i+1}-M_n\right) \right)
			} \\
		&\qquad\qquad\qquad\qquad\qquad\qquad\cdot
			\frac{ \left( f(t_{k_i+1})-M_{k_i+1}+g(\tau_{k_i+1}) \right)
			}{
				\left( f(t_{k_i+1})-M_{k_i+1}+g(\tau_{k_i+1}) + \left(M_{k_i+1}-M_n\right) \right)
			} \\
		&\geq \rho^{k_i+1}_{j_i} \cdot
			\frac{1}{1+1}\cdot\frac{1}{1+1},
	\end{align*}
	where the last inequality holds due to $M_{k_i+1}-M_n\leq M_{k_i+1}-M\leq g(\tau_{k_i+1})$, and use Lemma~\ref{lem:lower-bound} to conclude that
	\begin{align*}
		\rho^n_i \geq \frac14\,\rho^{k_i+1}_{j_i} \geq \frac{1}{96}\,\rho^{k_i} \geq \frac{1}{96}\cdot\frac{2}{ 3\lambda\log(1/\tau_n) }.
	\end{align*}
	This shows
	\begin{align*}
		\sum_{i=1}^n \rho^n_i \geq \frac{n}{2}\cdot \frac{1}{96}\cdot\frac{2}{ 3\lambda\log(1/\tau_n) }.
	\end{align*}
\end{proof}

\subsection{Main Deterministic Result}
For the following simple fact we omit the proof.

\begin{lem}\label{lem:trivial}
	Let $C>0$, $\lambda\geq 1$, $n\geq 2$, and $0<\varepsilon\leq 1/n$ with
	\begin{align*}
		\frac{n}{\log\left(1/{\varepsilon}\right)}
			\leq C\cdot \left(\log(1/g(\varepsilon))\right)^4.
	\end{align*}
	Then there exists a constant $\tilde C>0$ that only depends on $C$ and $\lambda$ such that
	\begin{align*}
		g(\varepsilon) \leq {\tilde C}\cdot \exp\left(-{1}/{\tilde C}\cdot n^{1/5}\right).
	\end{align*}
\end{lem}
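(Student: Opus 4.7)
The plan is a direct computation, since the hypothesis essentially pins down $\log(1/\varepsilon)$ within a constant factor. Write $L := \log(1/\varepsilon) \ge \log n$, and observe from $g(\varepsilon)^2 = \lambda\varepsilon L$ that
\[
    \log(1/g(\varepsilon)) = \tfrac12\bigl(L - \log L - \log\lambda\bigr).
\]
For all $n$ exceeding a threshold $n_0(\lambda)$ we have $\log L + \log\lambda \ge 0$ and $\log L + \log\lambda \le L/2$, so $0 < \log(1/g(\varepsilon)) \le L/2 \le L$; in particular $g(\varepsilon) < 1$. Substituting this upper bound into the hypothesis gives $n/L \le C\,L^4$, and therefore
\[
    L \ge (n/C)^{1/5}, \qquad\text{equivalently}\qquad \varepsilon \le \exp\bigl(-(n/C)^{1/5}\bigr).
\]

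Next, feed this back into the definition $g(\varepsilon)^2 = \lambda\,\phi(\varepsilon)$, where $\phi(x) = x\log(1/x)$ is increasing on $(0,1/e)$. For $n$ large enough that $\exp(-(n/C)^{1/5}) \le 1/e$, monotonicity of $\phi$ yields
\[
    g(\varepsilon)^2 \le \lambda\cdot (n/C)^{1/5}\cdot \exp\bigl(-(n/C)^{1/5}\bigr).
\]
Taking square roots and absorbing the polynomial prefactor $\sqrt{\lambda}\,(n/C)^{1/10}$ into the exponential---using $n^{1/10} \le \exp(n^{1/5}/k)$ for any fixed $k>0$ and all sufficiently large $n$---produces a bound of the form $g(\varepsilon) \le \exp(-n^{1/5}/\tilde C)$ valid for $n \ge n_1(\lambda,C)$, with $\tilde C$ depending only on $\lambda$ and $C$.

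Finally, the finitely many small values $n \in \{2,\ldots,n_1\}$ are handled by enlarging $\tilde C$ so that the bound $g(\varepsilon) \le \tilde C\exp(-n^{1/5}/\tilde C)$ holds trivially; this is possible because $g(\varepsilon)^2 = \lambda\varepsilon\log(1/\varepsilon)$ is uniformly bounded on $\{0<\varepsilon \le 1/n\}$ in terms of $\lambda$ alone. The statement is labelled ``trivial'' in the paper and I agree---there is no substantive obstacle. The only care needed is bookkeeping: verifying that every threshold and constant ($n_0$, $n_1$, $\tilde C$) depends only on $\lambda$ and $C$ and not on $\varepsilon$, which is immediate since each estimate above uses only the hypothesis together with $L \ge \log n$.
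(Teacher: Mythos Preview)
Your proof is correct. The paper actually omits the proof of this lemma entirely (it is introduced with ``For the following simple fact we omit the proof''), so there is no argument to compare against; your direct computation---bounding $\log(1/g(\varepsilon))$ by $L=\log(1/\varepsilon)$, extracting $L\ge (n/C)^{1/5}$, and then absorbing the polynomial prefactor into the exponential---is exactly the kind of routine verification the authors had in mind.
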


For $n\geq 4$, $\lambda\in{[1,\infty[}$, and $C>0$ we define
\begin{align}\label{eq:favorable}
	E_{C,n} = \left(
			F_{n}\cap J_{n}^+\cap J_{n}^-\cap G_{{1}/{2},n}\cap H_{C,n}
		\right)
		\cap
		\left(
			\bigcap_{k=\lceil n/2\rceil}^n \left(G_{1,k}\cap F_{k}\right)
		\right).
\end{align}
Note that $E_{C_1,n}\subseteq E_{C_2,n}$, if $0<C_1\leq C_2$.

\begin{cor}\label{cor:mainproperror}
	Let $\lambda\geq 1$, $n\geq 4$, $C>0$, and $f\in E_{C,n}$.
	Then there exists a constant $\tilde C>0$ that only depends on $C$ and $\lambda$ such that
	\begin{align*}
		\Delta_n \leq g(\tau_n)
			\leq {\tilde C}\cdot \exp\left(-{1}/{\tilde C}\cdot n^{1/5}\right).
	\end{align*}
\end{cor}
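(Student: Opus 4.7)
The corollary is essentially a bookkeeping exercise that chains the main deterministic estimates established above, so the plan is to unpack the definition of $E_{C,n}$ and feed the pieces into the appropriate lemmas.

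First I would dispatch the easy inequality $\Delta_n \leq g(\tau_n)$. By definition of $E_{C,n}$, taking $k=n$ in the intersection $\bigcap_{k=\lceil n/2\rceil}^n(G_{1,k}\cap F_k)$ shows $f\in G_{1,n}$, which is exactly the statement $\Delta_n\leq g(\tau_n)$. (Alternatively, $f\in G_{1/2,n}\subseteq G_{1,n}$.)

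For the exponential bound, the plan is to sandwich $\sum_{i=1}^n \rho^n_i$ from both sides. From the first block of conditions $F_n\cap J_n^+\cap J_n^-\cap G_{1/2,n}\cap H_{C,n}$, Corollary~\ref{cor:lowerbounddet} yields the upper bound
\begin{align*}
\sum_{i=1}^n \rho^n_i \leq 9C\cdot(\log(1/g(\tau_n)))^4.
\end{align*}
From the second block $\bigcap_{k=\lceil n/2\rceil}^n(G_{1,k}\cap F_k)$, Proposition~\ref{prop:upperbounddet} (applicable since $n\geq 4$) yields the lower bound
\begin{align*}
\sum_{i=1}^n \rho^n_i \geq \frac{1}{288}\cdot\frac{n}{\lambda\log(1/\tau_n)}.
\end{align*}
Combining the two estimates and rearranging gives
\begin{align*}
\frac{n}{\log(1/\tau_n)} \leq 288\cdot 9\cdot C\cdot\lambda\cdot(\log(1/g(\tau_n)))^4,
\end{align*}
which is exactly the hypothesis of Lemma~\ref{lem:trivial} with $\varepsilon=\tau_n$ and constant $2592\,C\lambda$.

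Finally I would verify $0<\tau_n\leq 1/n$: the $n+1$ ordered points $t^n_0<\dots<t^n_n$ partition $[0,1]$ into $n$ subintervals, so their minimum length $\tau_n$ is at most $1/n$. Thus Lemma~\ref{lem:trivial} produces a constant $\tilde C>0$ depending only on $C$ and $\lambda$ such that $g(\tau_n)\leq \tilde C\exp(-n^{1/5}/\tilde C)$, completing the proof. There is no real obstacle here; the work is already done in the preceding lemmas, and the only mild points to check are the inclusion $G_{1/2,n}\subseteq G_{1,n}$ (giving $\Delta_n\leq g(\tau_n)$), the index condition $n\geq 4$ required by Proposition~\ref{prop:upperbounddet}, and the elementary bound $\tau_n\leq 1/n$ needed to invoke Lemma~\ref{lem:trivial}.
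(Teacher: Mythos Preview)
Your proposal is correct and follows exactly the same route as the paper: the first inequality comes from $f\in G_{1,n}$, and the second from sandwiching $\sum_{i=1}^n\rho_i^n$ between the bounds of Corollary~\ref{cor:lowerbounddet} and Proposition~\ref{prop:upperbounddet} and then feeding the resulting inequality into Lemma~\ref{lem:trivial} with $\varepsilon=\tau_n$. The paper's own proof is a one-line citation of these three results, and you have simply spelled out the chaining (including the check $\tau_n\le 1/n$) in full.
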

\begin{proof}
	The first inequality holds by definition of $G_{1,n}$. The
	second inequality is an immediate consequence of Corollary~\ref{cor:lowerbounddet},
	Proposition~\ref{prop:upperbounddet}, and Lemma~\ref{lem:trivial}.
\end{proof}

\section{Probabilistic Arguments}\label{sec:probabilistic}
In the previous section we studied the application of the
optimization algorithm to an element $f\in F$. In particular,
Corollary~\ref{cor:mainproperror} provides an exponentially small
error bound for functions $f$ belonging to subsets $E_{C,n}$ of $F$.
In this section we consider the special case of a Brownian motion
$W=(W(t))_{0\leq t\leq 1}$ and show that the probability of a Brownian
path belonging to $E_{C,n}$ tends to $1$ at an arbitrarily high
polynomial rate, see Corollary~\ref{cor:mainlowerboundprop}.
It turns out that this probability bound depends on the parameter $\lambda$.

Let us stress that all quantities defined in Section~\ref{sec:algo}
(e.g.,~$M_n,M,\tau_n,\ldots$) are now understood to depend on $W$ instead of $f$.
Hence these quantities are random. Furthermore, for a set of functions $A\subseteq F$
(e.g., $F_n,G_{1/2,n},\ldots$) we write $\prob(A)$ instead of $\prob(W\in A)$.

\subsection{Lower Bound for \texorpdfstring{$\prob\left(F_{n} \right)$}{P(Fn)}}
The following basic result is well-known, for completeness we add a proof.

\begin{lem}\label{lem:max-normal}
	Let $n\in\N$ and $Z_1,\ldots,Z_n$ be identically distributed with $Z_1\sim\ndist{0}{1}$.
	Then we have
	\begin{align*}
		\prob\left( \max_{1\leq k\leq n} \abs{Z_k} \leq t \right)
			\geq 1 - n\cdot\frac{2}{\sqrt{2\pi}}\cdot \frac{1}{t}\cdot\exp(-t^2/2)
	\end{align*}
	for all $t>0$.
\end{lem}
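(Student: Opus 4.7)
The plan is to combine a union bound with a standard Gaussian tail estimate (Mills' ratio). Since the $Z_k$ are only assumed identically distributed (not independent), the union bound is the natural tool: the joint distribution is irrelevant for an upper bound on a tail of a maximum, because
\begin{align*}
	\prob\!\left( \max_{1\leq k\leq n}\abs{Z_k} > t \right)
		= \prob\!\left( \bigcup_{k=1}^n \set{\abs{Z_k}>t} \right)
		\leq \sum_{k=1}^n \prob(\abs{Z_k}>t)
		= n\cdot\prob(\abs{Z_1}>t),
\end{align*}
where the last equality uses that the $Z_k$ have a common distribution.

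Next I would reduce to the one-sided tail by symmetry of $\ndist{0}{1}$, giving $\prob(\abs{Z_1}>t)=2\prob(Z_1>t)$. The remaining task is the Gaussian tail estimate
\begin{align*}
	\prob(Z_1>t) \leq \frac{1}{\sqrt{2\pi}}\cdot\frac{1}{t}\cdot\exp(-t^2/2),\qquad t>0.
\end{align*}
This follows from the elementary Mills-ratio argument: on the interval $[t,\infty[$ the factor $x/t\geq 1$, so
\begin{align*}
	\prob(Z_1>t) = \int_t^\infty \frac{1}{\sqrt{2\pi}}\,e^{-x^2/2}\dx
		\leq \int_t^\infty \frac{x}{t}\cdot\frac{1}{\sqrt{2\pi}}\,e^{-x^2/2}\dx
		= \frac{1}{t\sqrt{2\pi}}\,e^{-t^2/2},
\end{align*}
where the last step uses the antiderivative $-e^{-x^2/2}$ of $x\,e^{-x^2/2}$.

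Combining the three ingredients yields
\begin{align*}
	\prob\!\left( \max_{1\leq k\leq n}\abs{Z_k} \leq t \right)
		= 1 - \prob\!\left( \max_{1\leq k\leq n}\abs{Z_k} > t \right)
		\geq 1 - n\cdot\frac{2}{\sqrt{2\pi}}\cdot\frac{1}{t}\cdot\exp(-t^2/2),
\end{align*}
which is the claim. There is no real obstacle here; the only minor point worth flagging is that the hypothesis gives only identical marginals, so one must be careful to use a union bound rather than anything that would require independence (e.g., a product formula for the CDF of the maximum).
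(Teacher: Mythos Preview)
Your proof is correct and follows essentially the same approach as the paper: a union bound to reduce to a single Gaussian tail, followed by the standard Mills-ratio estimate. The only difference is that you spell out the Mills-ratio argument explicitly, whereas the paper simply states the inequality $\prob(\abs{Z_1}>t)\le \frac{2}{\sqrt{2\pi}}\cdot\frac{1}{t}\cdot\exp(-t^2/2)$ without proof.
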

\begin{proof}
	We clearly have
	\begin{align*}
		\prob\left( \max_{1\leq k\leq n} \abs{Z_k} > t \right)
				\leq \sum_{k=1}^n \prob( \abs{Z_k} > t )
			= n\cdot \prob( \abs{Z_1} > t )
	\end{align*}
	for $t\in\R$. Combining this with the inequality
	\begin{align*}
		\prob( \abs{Z_1} > t )
			\leq \frac{2}{\sqrt{2\pi}}\cdot \frac{1}{t}\cdot\exp(-t^2/2)
	\end{align*}
	for $t>0$ yields the claim.
\end{proof}

\begin{lem}\label{lem:bound-F}
	We have
	\begin{align*}
	\prob\left(F_{n} \right) \geq 1 -7 \cdot n^{1-\lambda/72}
	\end{align*}
	for all $\lambda\geq1$ and for all $n\geq2$.
\end{lem}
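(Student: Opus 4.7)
The plan is to decompose $F_n^c$ according to the first step at which some normalized increment exceeds $t := \sqrt{\lambda\log(n)/4}$, and then to analyse that step using the Brownian bridge representation of $W$ at the midpoint of the split interval.

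Set $\xi_i^k = (W(t_i^k)-W(t_{i-1}^k))/\sqrt{t_i^k-t_{i-1}^k}$, $A_k = \{\max_{1\leq i\leq k}|\xi_i^k|\leq t\}$ and $F_k=\bigcap_{l=1}^k A_l$ (with $F_0=\Omega$); the event in the lemma equals $F_n$. The decomposition
\begin{align*}
F_n^c = \bigsqcup_{k=1}^n (F_{k-1}\cap A_k^c)
\end{align*}
is disjoint and reduces the problem to bounding $\prob(F_{k-1}\cap A_k^c)$ for each $k$. The key observation is that for $k\geq 2$ the step-$k$ partition consists of the $k-2$ intervals inherited unchanged from step $k-1$ plus exactly $2$ ``new'' intervals obtained by splitting a chosen interval $[a,b]:=[t_{j-1}^{k-1},t_j^{k-1}]$ at its midpoint $t_k$. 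On $F_{k-1}$ the inherited intervals already satisfy the bound, so $F_{k-1}\cap A_k^c$ forces one of the two new normalized increments to exceed $t$.

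The next step exploits the Brownian bridge. Conditional on $\mathcal{F}_{k-1}:=\sigma(W(t_0),\ldots,W(t_{k-1}))$, the split interval $[a,b]$ and the midpoint $t_k$ are determined, and $W(t_k)=(W(a)+W(b))/2+\eta$ with $\eta\sim\ndist{0}{(b-a)/4}$ independent of $\mathcal{F}_{k-1}$. A short computation then shows that both new normalized increments are conditionally $\ndist{\xi_{[a,b]}/\sqrt 2}{1/2}$ given $\mathcal{F}_{k-1}$. Since $[a,b]$ is a step-$(k-1)$ interval, $|\xi_{[a,b]}|\leq t$ on $F_{k-1}$, so the conditional mean has magnitude at most $t/\sqrt 2$; a direct tail calculation then gives
\begin{align*}
\prob(|\xi_{\text{new}}|>t \mid \mathcal{F}_{k-1})\cdot\ind_{F_{k-1}} \leq 2\,\prob(Z>(\sqrt 2-1)t),
\end{align*}
where $Z\sim\ndist{0}{1}$.

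Taking expectations, summing over the $2$ new intervals and over $k=2,\ldots,n$, and adding the contribution $\prob(A_1^c)=2\,\prob(Z>t)$ from $k=1$, yields
\begin{align*}
\prob(F_n^c) \leq 2\,\prob(Z>t) + 4(n-1)\,\prob(Z>(\sqrt 2-1)t).
\end{align*}
Inserting the Gaussian tail estimate from Lemma~\ref{lem:max-normal} with $t^2=\lambda\log(n)/4$ produces factors $n^{-\lambda/8}$ and $n^{-\lambda(\sqrt 2-1)^2/8}$. The elementary inequality $(\sqrt 2-1)^2 = 3-2\sqrt 2 \geq 1/9$ bounds the latter by $n^{-\lambda/72}$, and routine bookkeeping of the prefactors (bounded uniformly for $n\geq 2$, $\lambda\geq 1$ via $t\geq \sqrt{(\log 2)/4}$) yields the claimed bound $7\,n^{1-\lambda/72}$.

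The main obstacle will be the conditional analysis: checking that the split interval and its midpoint are $\mathcal{F}_{k-1}$-measurable and verifying that the Brownian bridge gives the conditional distribution with variance exactly $1/2$ and mean $\xi_{[a,b]}/\sqrt 2$. Once this is in place, the exponent $1/72$ emerges cleanly from $t^2/2=\lambda\log(n)/8$ combined with $(\sqrt 2-1)^2\geq 1/9$, and the prefactor $7$ is tight enough to absorb the remaining constants uniformly in $n$ and $\lambda$.
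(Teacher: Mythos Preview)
Your argument is correct and reaches the stated bound, but it is organized differently from the paper's proof. The paper does not use a first-exit decomposition. Instead, it introduces the normalized increments $X_k$ together with the bridge deviations $Z_k=(X_{2k}-X_{2k+1})/\sqrt 2\sim\ndist{0}{1}$ and proves the \emph{deterministic} recursive inequality
\[
\max_{1\le k\le 2n-1}|X_k|\ \le\ \frac{1}{\sqrt 2-1}\,\max\bigl(|X_1|,|Z_1|,\ldots,|Z_{n-1}|\bigr),
\]
obtained by iterating $\max(|X_{2k}|,|X_{2k+1}|)\le(|Y_k|+|Z_k|)/\sqrt2$ through the algebraic fact $(a/(\sqrt2-1)+b)/\sqrt2\le\max(a,b)/(\sqrt2-1)$. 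A single union bound over the $n$ identically $\ndist{0}{1}$ variables on the right then yields the lemma. Your route replaces this pathwise fixed-point trick by a step-by-step conditional tail estimate: on $F_{k-1}$ the parent increment is at most $t$, so each child is conditionally $\ndist{\mu}{1/2}$ with $|\mu|\le t/\sqrt2$, giving the same threshold $(\sqrt2-1)t$ for a standard normal. Both arguments therefore land on the exponent $(\sqrt2-1)^2/8\ge 1/72$; the paper's version packages everything into one clean inequality with possible independent interest, while yours is more elementary and makes the Brownian-bridge mechanism explicit.

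Two small remarks. First, your phrase ``$\eta\sim\ndist{0}{(b-a)/4}$ independent of $\mathcal F_{k-1}$'' is slightly imprecise since $b-a$ is itself $\mathcal F_{k-1}$-measurable; what is genuinely independent is the standardized $\eta/\sqrt{(b-a)/4}$, which is exactly what your computation uses. Second, your union bound over the two children is not needed: from $\xi_{\mathrm{left}}=\xi_{[a,b]}/\sqrt2+\zeta$ and $\xi_{\mathrm{right}}=\xi_{[a,b]}/\sqrt2-\zeta$ one has $\max(|\xi_{\mathrm{left}}|,|\xi_{\mathrm{right}}|)=|\xi_{[a,b]}|/\sqrt2+|\zeta|$, so a single event $\{|Z|>(\sqrt2-1)t\}$ controls both. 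With your factor $4$ the constant still fits under $7$ once one tracks the dependence of $t=\sqrt{\lambda\log(n)/4}$ on $n$ (the ratio $\tfrac{4(n-1)}{n(\sqrt2-1)t\sqrt{2\pi}}$ is maximized near $n=4$ at about $4.9$, and the $k=1$ term contributes less than $1$), so your ``routine bookkeeping'' claim does go through.
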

\begin{proof}
	For $n\geq 2$ we denote by $i_n\in\{1,\dots,n\}$ the index of the interval which will
	be split in step $n$, i.e., with $\rho^n_{i_n} = \rho^n$ (note that $i_n$ is random).
	Moreover, we set $i_1=1$. Note that
	\begin{align*}
		t^{n+1}_{i_n-1}=t^n_{i_n-1}, \qquad
		t_{i_n+1}^{n+1}=t_{i_n}^n, \qquad
		t_{i_n}^{n+1}=t_{n+1}=(t^n_{i_n-1}+t_{i_n}^n)/2,
	\end{align*}
	and
	\begin{align*}
		t_{i_n}^{n+1}-t_{i_n-1}^{n+1}=t_{i_n+1}^{n+1}-t_{i_n}^{n+1}=(t_{i_n}^{n}-t_{i_n-1}^{n})/2.
	\end{align*}
	Furthermore, we define $X_1=\frac{W(t_1^1)-W(t_0^1)}{\sqrt{t_1^1-t_0^1}}=W(1)$ and
	\begin{align*}
		X_{2n} = \frac{ W(t_{i_n}^{n+1})-W(t_{i_n-1}^{n+1}) }
				{ \sqrt{t_{i_n}^{n+1}-t_{i_n-1}^{n+1}} }, \qquad
			X_{2n+1} = \frac{ W(t_{i_n+1}^{n+1})-W(t_{i_n}^{n+1}) }
				{ \sqrt{t_{i_n+1}^{n+1}-t_{i_n}^{n+1}} }
	\end{align*}
	for $n\geq1$. Let us stress that that
	\begin{align*}
		\{ X_1,\dots,X_{2n-1} \} =
			\left\{
				\frac{ W(t^k_i)-W(t^k_{i-1}) }
				{ \sqrt{t^k_i-t^k_{i-1}} }
				:\,1\leq i\leq k\leq n
			\right\}
	\end{align*}
	for $n\geq1$ and thus
	\begin{align}\label{eq:set-equality}
		\max_{1\leq k\leq 2n-1}\abs{ X_k }
			= \max_{1\le k\le n} \max_{1\le i\le k}
				\frac{ \abs{W(t^k_i)-W(t^k_{i-1})} }
				{ \sqrt{t^k_i-t^k_{i-1}} }.
	\end{align}
	For $n\geq 1$ we define
	\begin{align*}
		Y_n = \frac{ W(t_{i_n}^{n})-W(t_{i_n-1}^{n}) }
				{ \sqrt{t_{i_n}^{n}-t_{i_n-1}^{n}} }
			= \frac{ X_{2n}+X_{2n+1} }{ \sqrt{2} }.
	\end{align*}
	Note that for every $n\geq 1$ there exists a random index $j_n\in\{2(n-1),2(n-1)+1\}$
	with $Y_n=X_{j_n}$ where we use the convention $X_0=X_1$. This yields
	\begin{align}\label{eq:bound-Yn}
		\vert Y_n\vert \leq \max \left(
				\abs{ X_{2(n-1)} }, \abs{ X_{2(n-1)+1} }
			\right).
	\end{align}
	Finally, we define
	\begin{align*}
		Z_n = \frac{ X_{2n}-X_{2n+1} }{ \sqrt{2} }
	\end{align*}
	for $n\geq 1$. Since
	\begin{align*}
		Z_n = \frac{
				2W(t_{i_n}^{n+1})-\left( W(t_{i_n-1}^{n+1})+W(t_{i_n+1}^{n+1}) \right)
			}{
				\sqrt{{t_{i_n}^n-t_{i_n-1}^n}}
			}
			= \frac{
				W\left(\frac{t_{i_n}^n+t_{i_n-1}^n}{2}\right)-\frac{W(t_{i_n-1}^n)+W(t_{i_n}^n)}{2}
			}{
				\sqrt{(t_{i_n}^n-t_{i_n-1}^n)/4}
			},
	\end{align*}
	we have $Z_n\sim\ndist{0}{1}$. Furthermore, note that
	\begin{align*}
		X_{2n} = \frac{Y_n+Z_n}{\sqrt{2}}, \qquad
			X_{2n+1} = \frac{Y_n-Z_n}{\sqrt{2}}.
	\end{align*}
	Hence we get
	\begin{align*}
		\max\left( \abs{ X_{2n} }, \abs{ X_{2n+1} } \right)
			\leq \frac{ \abs{Y_n}+\abs{Z_n} }{ \sqrt{2} }
			\leq \frac{
				\max\left(\abs{ X_{2(n-1)} }, \abs{ X_{2(n-1)+1} }\right)+\abs{Z_n}
			}{
				\sqrt{2}
			}
	\end{align*}
	for $n\geq 1$ due to \eqref{eq:bound-Yn}. Combining this with the inequality
	\begin{align*}
		\frac{\frac{a}{\sqrt{2}-1}+b}{\sqrt{2}}
			\leq \frac{\max(a,b)}{\sqrt{2}-1}
	\end{align*}
	for $a,b\in\R$, we obtain by induction
	\begin{align}\label{eq:bound-Xn}
		\max_{1\leq k\leq 2n-1} \abs{ X_k }
			\leq \frac{
				\max\left( \abs{X_1},\abs{Z_1},\ldots,\abs{Z_{n-1}} \right)
			}{
				\sqrt{2}-1
			}
	\end{align}
	for all $n\geq 1$. Finally, combining \eqref{eq:set-equality},
	\eqref{eq:bound-Xn}, and Lemma~\ref{lem:max-normal} yields
	\begin{align*}
		\prob\left(F_{n}\right)
			&= \prob\left(
				\max_{1\leq k\leq 2n-1} \abs{ X_k } \leq \sqrt{\lambda\log(n)/4}
			\right) \\
		&\geq \prob\left(
				\max\left(|X_1|,|Z_1|,\dots,|Z_{n-1}|\right)
				\leq (\sqrt{2}-1)\cdot\sqrt{\lambda\log(n)/4}
			\right) \\
		&\geq \prob\left(
				\max\left(|X_1|,|Z_1|,\dots,|Z_{n-1}|\right)
				\leq \sqrt{\lambda\log(n)/36}
			\right) \\
		&\geq 1 - n\cdot\frac{2}{\sqrt{2\pi}}\cdot{8} \cdot n^{-\lambda/72} \\
		&\geq 1 -{7} \cdot n^{1-\lambda/72}
	\end{align*}
	for $\lambda\geq1$ and $n\geq2$.
\end{proof}

\begin{rem}\label{rem:distx}
	Let us comment on the distribution of the random variables $X_1,X_2,\ldots$
	defined in the proof of Lemma~\ref{lem:bound-F}.
	Obviously, the random variables $X_1,X_2,X_3$ are standard normally
	distributed and jointly Gaussian. In contrast to that,
	the random variables $X_1,X_2,X_3,X_4,X_5$ are not jointly Gaussian,
	but still $X_4$ and $X_5$ are standard normally distributed.
	However, computer simulations strongly suggest that $X_6$
	is not standard normally distributed.
	Since the evaluation points $t_0,t_1,\ldots$ are computed adaptively,
	we conjecture that $X_n$ is not standard normally distributed for all $n\geq 6$.
\end{rem}

\subsection{Lower Bound for \texorpdfstring{$\prob\left(G_{{1}/{2},n}\right)$}{P(G1/2,n)}
	and \texorpdfstring{$\prob\left(G_{{1},n} \right)$}{P(G1,n)}}

\begin{lem}\label{lem:probability-boundprop}
	We have
	\begin{align*}
		\prob\left(G_{{1},n} \right)
			\geq \prob\left(G_{{1}/{2},n} \right)
			\geq 1-8\cdot n^{1-\lambda/72}
	\end{align*}
	for all $\lambda\geq1$ and for all $n\geq2$.
\end{lem}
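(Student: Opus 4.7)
The first inequality is immediate from the inclusion $G_{1/2,n} \subseteq G_{1,n}$. For the main bound I plan to condition on the $\sigma$-algebra $\mathcal{F}_n$ generated by the adaptively chosen points $t_0,\ldots,t_n$ together with the values $W(t_0),\ldots,W(t_n)$. Given $\mathcal{F}_n$, the restriction of $W$ to each subinterval $[t^n_{i-1},t^n_i]$ is an independent Brownian bridge with endpoints $a_i=W(t^n_{i-1})$ and $b_i=W(t^n_i)$ over time $\ell_i=t^n_i-t^n_{i-1}$. The classical Brownian bridge minimum formula then gives, for every $m\leq M_n$,
\begin{equation*}
	\prob\!\left(\min\nolimits_{[t^n_{i-1},t^n_i]} W < m \,\bigm|\, \mathcal{F}_n\right)
		= \exp\!\left(-\frac{2(a_i-m)(b_i-m)}{\ell_i}\right).
\end{equation*}
Setting $m = M_n - \tfrac{1}{2}g(\tau_n)$ (which satisfies $m<M_n\leq\min(a_i,b_i)$) and applying a union bound over the $n$ subintervals yields a conditional upper bound on $\prob(\Delta_n > \tfrac{1}{2}g(\tau_n)\mid\mathcal{F}_n)$ as a sum of such exponentials.

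Next, I restrict to the event $F_n$, on which Lemma~\ref{lem:AA} supplies $(\min(a_i,b_i)-M_n+g(\tau_n))^2 \geq \lambda\log(n)\,\ell_i/4$ uniformly in $i$. Since $a_i-M_n\geq 0$ and $b_i-M_n\geq 0$, the elementary inequality $\alpha+\tfrac{1}{2}G\geq \tfrac{1}{2}(\alpha+G)$ for $\alpha,G\geq 0$ gives both
\begin{equation*}
	a_i - M_n + \tfrac{1}{2}g(\tau_n) \geq \tfrac{1}{2}\bigl(\min(a_i,b_i)-M_n+g(\tau_n)\bigr)
		\geq \tfrac{1}{4}\sqrt{\lambda\log(n)\,\ell_i}
\end{equation*}
and the analogous bound for $b_i$. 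Hence each exponent in the union bound is at most $-\lambda\log(n)/8$, so each term is bounded by $n^{-\lambda/8}$ and the sum by $n^{1-\lambda/8}$.

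Finally, splitting the expectation on $F_n$ versus $F_n^c$ and invoking Lemma~\ref{lem:bound-F} gives
\begin{equation*}
	\prob(G_{1/2,n}^c) \leq n^{1-\lambda/8} + 7\,n^{1-\lambda/72} \leq 8\,n^{1-\lambda/72},
\end{equation*}
where the last inequality uses $\lambda/8\geq\lambda/72$ for $\lambda\geq 1$. The main subtlety I anticipate is the rigorous justification of the conditional Brownian bridge representation in the adaptive setting: one must verify that conditioning on the $\sigma$-algebra of adaptively sampled values still yields independent Brownian bridges on the resulting random subintervals. This follows from a strong Markov / conditional independence argument applied inductively along the sequence of evaluations, but needs a clean formulation to avoid circularity with the adaptive selection rule.
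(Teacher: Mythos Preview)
Your approach is essentially the same as the paper's: condition on the $\sigma$-algebra generated by the adaptively observed values, apply the Brownian-bridge minimum formula on each subinterval, restrict to $F_n$ to control the exponents, and then combine with Lemma~\ref{lem:bound-F}. The only substantive difference is that the paper bounds the exponent via Lemma~\ref{lem:rho} (using $(a_i-M_n+\tfrac12 g)(b_i-M_n+\tfrac12 g)\geq \tfrac14(a_i-M_n+g)(b_i-M_n+g)=\ell_i/(4\rho^n_i)$ and $\rho^n\le 2/(\lambda\log n)$), obtaining $n^{1-\lambda/4}$ instead of your $n^{1-\lambda/8}$ from Lemma~\ref{lem:AA}; since both are dominated by the $7\,n^{1-\lambda/72}$ term from $\prob(F_n^c)$, this is immaterial for the final bound. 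Regarding the subtlety you flag about the adaptive conditioning: the paper treats it exactly as you do, simply asserting the Brownian-bridge representation conditional on $\mathfrak{A}_n=\sigma(W(t_1),\dots,W(t_n))$ with a reference to the bridge-minimum formula, so your inductive justification is already more careful than what the paper provides.
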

\begin{proof}
	For $n\in\N$ we denote by
	\begin{align*}
		\mathfrak{A}_n = \sigma(W(t_1),\ldots,W(t_n)) = \sigma(t_1,W(t_1),\ldots,t_n,W(t_n))
	\end{align*}
	the $\sigma$-algebra generated by $(W(t_1),\ldots,W(t_n))$.
	Note that $1_{F_{n}}$ is measurable w.r.t.\ $\mathfrak{A}_n$ for all $\lambda\geq1$ and $n\geq2$.

	Conditional on $\mathfrak{A}_n$, the minimizers over all subintervals $[t^n_{0},t^n_1],\ldots,[t^n_{n-1},t^n_{n}]$
	are independent with distribution (independent Brownian bridges)
	\begin{align*}
		\prob\left( \min_{t^n_{i-1}\le s\le t^n_i}W(s) < y \right)
			= \exp \left(
					-\frac{2}{t^n_i-t^n_{i-1}} \left(W(t^n_{i-1})-y\right) \left(W(t^n_{i})-y\right)
				\right)
	\end{align*}
	for $y<\min(W(t^n_{i-1}),W(t^n_i))$, see \cite[IV.4, p.~67]{handbook-bm} or \cite{Shepp}.
	For $\beta\in[0,1]$, we hence get
	\begin{align*}
		\prob ( &\Delta_n \leq \beta g(\tau_n) \cond \mathfrak{A}_n ) \\
		&= \prod_{i=1}^n \left(
				1-\exp \left(
					-\frac{2}{t^n_i-t^n_{i-1}}
					\left( W(t^n_{i-1})-M_n+\beta g(\tau_n) \right)
					\left( W(t^n_{i})-M_n+\beta g(\tau_n) \right)
				\right)
			\right) \\
		&\geq \prod_{i=1}^n \left( 1-\exp\left(-2\beta^2/\rho^n_i\right) \right) \\
		&\geq \left( 1-\exp\left(-2\beta^2/\rho^n\right) \right)^n.
	\end{align*}
	Then, Lemma~\ref{lem:rho} implies
	\begin{align*}
		\prob ( \Delta_n \leq \beta g(\tau_n) \cond \mathfrak{A}_n )
			&\geq \left( 1-\exp \left(-\beta^2\lambda\log(n)\right) \right)^n \\
		&\geq 1-n^{1-\beta^2\lambda}
	\end{align*}
	on $F_{n}$.
	Setting $B_n=\{\Delta_n \leq \beta g(\tau_n)\}$, we thus obtain
	\begin{align*}
		\prob(B_n) \geq \E{ \E{ 1_{B_n\cap F_{n}}\cond \mathfrak{A}_n } }
			= \E{ 1_{F_{n}} \cdot \E{ 1_{B_n}\cond \mathfrak{A}_n } }
			\geq \prob(F_{n}) \cdot \left( 1-n^{1-\beta^2\lambda} \right).
	\end{align*}
	Set $\beta=1/2$.
	Finally, Lemma~\ref{lem:bound-F} shows
	\begin{align*}
		\prob(B_n)
		\geq
		\left(
		1-7\cdot n^{1-\lambda/72}
		\right)^+
		\cdot
		\left(
		1-n^{1-\lambda/4}
		\right)^+
		\geq
		1- 8\cdot n^{1-\lambda/72}
	\end{align*}
	for $\lambda\geq1$ and $n\geq2$.
\end{proof}

\subsection{Lower Bound for \texorpdfstring{$\prob\left(J_{n}^+ \right)$}{P(Jn+)}
	and \texorpdfstring{$\prob\left(J_{n}^- \right)$}{P(Jn+)}}

\begin{lem}\label{lem:probability-bound}
	We have
	\begin{align*}
		\prob(J_{n}^+) = \prob(J_{n}^-) \geq 1-n^{1-\lambda/8}
	\end{align*}
	for all $\lambda\geq1$ and for all $n\geq2$.
\end{lem}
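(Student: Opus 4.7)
The plan is to condition on the $\sigma$-algebra $\mathfrak{A}_n = \sigma(t_1, W(t_1), \ldots, t_n, W(t_n))$ as in the proof of Lemma~\ref{lem:probability-boundprop}. Conditionally on $\mathfrak{A}_n$, the ordered points $t^n_0 < t^n_1 < \ldots < t^n_n$ and the values $W(t^n_i)$ are determined, and by the Markov property the restriction of $W$ to each subinterval $[t^n_{i-1}, t^n_i]$ is an independent Brownian bridge (after subtracting the linear interpolation of the endpoint values). In other words, the residual process $s \mapsto W(s) - L_n(s)$ on $[t^n_{i-1}, t^n_i]$ is, conditional on $\mathfrak{A}_n$, a standard Brownian bridge from $0$ to $0$ over an interval of length $T_i = t^n_i - t^n_{i-1}$, and these bridges are independent across $i$.

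Next I would invoke the classical formula for the supremum of a Brownian bridge: if $B$ is a Brownian bridge on $[0, T]$, then
\begin{align*}
    \prob\left( \sup_{0 \leq s \leq T} B(s) > y \right) = \exp\left( -2y^2/T \right)
\end{align*}
for $y > 0$. Applied with $y = c \sqrt{T_i}$ where $c = \tfrac{1}{2} \sqrt{\lambda \log(n)/4}$, and noting $2c^2 = \lambda \log(n)/8$, this yields
\begin{align*}
    \prob\left( \sup_{s\in[t^n_{i-1},t^n_i]} \frac{W(s) - L_n(s)}{\sqrt{t^n_i - t^n_{i-1}}} > c \,\Big|\, \mathfrak{A}_n \right) = n^{-\lambda/8}
\end{align*}
for each $i$.

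A union bound over $i \in \{1, \ldots, n\}$ then gives $\prob(J^+_n \mid \mathfrak{A}_n) \geq 1 - n^{1-\lambda/8}$, and taking expectation produces the stated bound. The same argument, applied to $-W$ (which is again a Brownian motion), yields the identical bound for $J^-_n$ and also shows $\prob(J_n^+) = \prob(J_n^-)$. I do not anticipate a serious obstacle here: the only point requiring care is the clean conditional-independence statement for the bridges, which follows from the strong Markov property once one observes that the ordered points $\{t^n_i\}$ and the values $\{W(t^n_i)\}$ are both $\mathfrak{A}_n$-measurable.
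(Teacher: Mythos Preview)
Your proposal is correct and follows the same route as the paper: both use that, conditional on $\mathfrak{A}_n$, the normalized residuals on each subinterval are independent standard Brownian bridges with $\prob(\sup B>y)=\exp(-2y^2)$, and then bound the probability of the simultaneous event (the paper via the exact product $(1-n^{-\lambda/8})^n\geq 1-n^{1-\lambda/8}$, you via a union bound yielding the same estimate). One small wording issue: the equality $\prob(J_n^+)=\prob(J_n^-)$ is not literally obtained by ``applying the argument to $-W$,'' since replacing $W$ by $-W$ would change the adaptive evaluation sites $t_k$; the relevant symmetry is that of the conditional bridge $W-L_n$ itself, which given $\mathfrak{A}_n$ has the same law as $-(W-L_n)$.
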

\begin{proof}
	Let
	\begin{align*}
		Y_i = \max_{t^n_{i-1}\le s\le t^n_i}
			\frac{W(s)-L_n(s)}{\sqrt{t^n_i-t^n_{i-1}}},
	\end{align*}
	which is the maximum of a standard Brownian bridge, and thus
	$\prob(Y_i>y)=\exp(-2y^2)$ for $y\geq0$,
	see \cite[IV.4, p.~67]{handbook-bm} or \cite{Shepp}.
	Moreover, the family
	$\left(Y_1,\ldots,Y_n\right)$ is independent and so
	\begin{align*}
		\prob\left( \max_{1\leq i\leq n}Y_i \leq \frac 12\sqrt{\lambda\log(n)/4} \right)
			= \left( 1-\exp(-\lambda\log(n)/8) \right)^n
			\geq 1-n^{1-\lambda/8}.
	\end{align*}
	By symmetry, we obtain the same bound for $J_{n}^-$.
\end{proof}

\subsection{Lower Bound for \texorpdfstring{$\prob\left(H_{C,n}\right)$}{P(HC,n)}}\label{sec:LBH}
For $T>0$ and $z\geq 0$ let $R^{T,z}=(R^{T,z}(t))_{0\leq t\leq T}$ denote
a $3$-dimensional Bessel bridge from $0$ to $z$ on $[0,T]$, that is
a $3$-dimensional Bessel process started at $0$ conditioned
to have value $z$ at time $T$. In other words,
for independent Brownian bridges $B^T_1$, $B^T_2$, and $B^T_3$
from $0$ to $0$ on $[0,T]$, see, e.g., \cite[p.~274]{Pitman}, we have
\begin{align}\label{e1}
	(R^{T,z}(t))_{0\leq t\leq T} \eqdist \left(\sqrt{
			\left( \frac{z\cdot t}{T}+B^T_1(t) \right)^2
			+\left( B^T_2(t) \right)^2
			+\left( B^T_3(t) \right)^2
		}\right)_{0\leq t\leq T},
\end{align}
where $\eqdist$ denotes equality in distribution.
A consequence of \eqref{e1} is the following scaling property
\begin{align}\label{e2}
	(R^{T,z}(t))_{0\leq t\leq T} \eqdist
		\left(\frac{1}{\sqrt{c}}\,R^{c\cdot T,\sqrt{c}\cdot z}(c\cdot t)\right)_{0\leq t\leq T}
\end{align}
for all $c>0$. Moreover, if $z_1\leq z_2$, there exist $3$-dimensional Bessel bridges
$R^{T,z_1}$ and $R^{T,z_2}$ (on a common probability space) such that
\begin{align}\label{e3}
	R^{T,z_1}(t) \leq R^{T,z_2}(t)
\end{align}
for all $0\leq t\leq T$. We refer to \cite[Chap.~XI]{Revuz-Yor} for a detailed discussion
of Bessel processes and Bessel bridges.

\begin{lem}\label{lem1}
	For all $r\geq1$ there exists a constant $C>0$ such that for all $0<T\leq 1$ and $z\geq 0$ we have
	\begin{align*}
		\prob \left(
				\int_0^T \frac{1}{ (R^{T,z}(t))^2+\eps } \dt
				\geq C\cdot \left( \log(1/\eps) \right)^4
			\right)
			\leq C\cdot\eps^r
	\end{align*}
	for all $\eps>0$.
\end{lem}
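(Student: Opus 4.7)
I begin by applying the two structural properties just recorded. By the pathwise monotonicity \eqref{e3}, $R^{T,0}\leq R^{T,z}$ almost surely (in the coupled version), so the integrand in the event is stochastically largest at $z=0$, and it suffices to treat $z=0$. By the scaling \eqref{e2} with $c=1/T$, $R^{T,0}(t)^2\eqdist T\cdot R^{1,0}(t/T)^2$, and the substitution $u=t/T$ gives
\begin{align*}
	\int_0^T \frac{1}{(R^{T,0}(t))^2+\eps}\dt
		\eqdist \int_0^1 \frac{1}{(R^{1,0}(u))^2+\delta}\,\mathrm{d}u,
		\qquad \delta:=\eps/T\geq\eps.
\end{align*}
When $\delta\geq 1$ the integrand is $\leq 1$, hence the integral is $\leq 1$, and the event is empty once $C(\log(1/\eps))^4\geq 1$; this degenerate range of parameters is absorbed into $C$. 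So I may assume $\delta\in(0,1)$.

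\textbf{Reduction to an exponential moment.} Set $I=\int_0^1 1/((R^{1,0}(u))^2+\delta)\,\mathrm{d}u$. The heart of the argument is to show that there exist absolute constants $c,D>0$ with
\begin{align*}
	\E{\exp\!\bigl(cI/\log(2/\delta)\bigr)}\leq D
\end{align*}
for every $\delta\in(0,1]$. By Chernoff this yields $\prob(I\geq K)\leq D\exp(-cK/\log(2/\delta))$. Plugging $K=C(\log(1/\eps))^4$ and using $\log(2/\delta)\leq 2\log(1/\eps)$ (for $\eps$ small) produces $D\exp(-(cC/2)(\log(1/\eps))^3)\leq \eps^r$ for any fixed $r$, once $C$ is large and $\eps$ is small; the remaining parameter values are absorbed into $C$.

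\textbf{Proof of the exponential moment via It\^o.} By time-reversal symmetry of $R^{1,0}$ it suffices to bound $I_1=\int_0^{1/2} 1/((R^{1,0}(u))^2+\delta)\,\mathrm{d}u$. Using $R^{1,0}=|\vec B^{(0)}|$ for a $3$-dimensional Brownian bridge $\vec B^{(0)}$ from $0$ to $0$ (with SDE $d\vec B^{(0)}=-\vec B^{(0)}/(1-t)\,\dt+d\vec B$), It\^o applied to $\log((R^{1,0})^2+\delta)$, combined with the inequalities $(R^2+3\delta)/(R^2+\delta)^2\geq 1/(R^2+\delta)$ and $2R^2/((1-s)(R^2+\delta))\leq 2/(1-s)$ (the latter integrating to $2\log 2$ on $[0,1/2]$), gives
\begin{align*}
	I_1\leq \log(1/\delta)+\log\!\bigl(1+(R^{1,0}(1/2))^2\bigr)+2\log 2+|M(1/2)|,
\end{align*}
where $M$ is a continuous local martingale with $\langle M\rangle_{1/2}\leq 4I_1$. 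Since $R^{1,0}(1/2)^2$ is a scaled $\chi^2_3$ variable, the $L^p$-norm of $\log(1+R^{1,0}(1/2)^2)$ is $O(p)$, and BDG yields $\|M(1/2)\|_p\leq C\sqrt{p}\,\|I_1\|_{p/2}^{1/2}$. A self-improving Minkowski recursion then gives $\|I_1\|_p\leq K(p+\log(2/\delta))$ for some absolute $K$, hence $\E{I_1^p}\leq (2K)^p(p^p+(\log(2/\delta))^p)$, and the desired exponential moment follows by summing the Taylor series for $c$ small enough (the two halves of $I$ are combined via Cauchy--Schwarz).

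\textbf{Main obstacle.} The chief technical difficulty is the It\^o calculation on the bridge: the bridge drift $-\vec B^{(0)}(t)/(1-t)$ is singular at $t=1$, which is why the integral must be split at $t=1/2$ with the other half addressed via time reversal; on $[0,1/2]$ the singularity is tame since $1-s\geq 1/2$. The other delicate point is closing the moment recursion for $\|I_1\|_p$ with constants uniform in $\delta\in(0,1]$ and of the right order in $p$; with these two steps in hand, the translation to the form required by the lemma is immediate.
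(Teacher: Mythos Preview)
Your reductions to $T=1$ and $z=0$ match the paper exactly, but from that point on the two arguments diverge completely. The paper identifies $R^{1,0}$ with the normalized Brownian excursion $B^{\mathrm{ex}}$, rewrites the integral via the occupation time formula as $\int_0^1 (x^2+\eps)^{-1}L_1(x)\,\mathrm{d}x$, invokes Pitman's theorem to represent the excursion local time $(L_1(x))_{x\ge 0}$ as the solution of an SDE, dominates that SDE by a $4$-dimensional squared Bessel process $Y$, and finally bounds $\int_0^1 (t^2+\eps)^{-1}Y(t)\,\mathrm{d}t$ using a law-of-the-iterated-logarithm normalization together with a Fernique-type Gaussian tail bound. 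Your approach is instead a direct stochastic-calculus argument: It\^o's formula for $\log(R^2+\delta)$ on the $3$-dimensional Brownian bridge, with the bridge drift singularity at $t=1$ neutralized by the time-reversal split at $t=1/2$, followed by a BDG/Minkowski moment recursion that yields $\|I_1\|_p\le K(p+\log(2/\delta))$ uniformly in $\delta$, and finally a Chernoff bound. Your route is more self-contained in that it avoids the excursion local-time machinery and the comparison theorem for SDEs; the paper's route, on the other hand, makes the structure transparent (the whole difficulty is pushed into the growth of the local time at $0$, which is classical) and delivers the tail estimate without any recursive bookkeeping. Both give slightly more than is needed, since the final bound decays like $\exp(-c(\log(1/\eps))^3)$, far below the polynomial $\eps^r$ required.
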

\begin{proof}
	We may assume $T=1$ and $z=0$ due to \eqref{e2} and \eqref{e3}, respectively. In this case
	\begin{align*}
		R^{1,0} = B^{\text{ex}} = (B^{\text{ex}}(t))_{0\leq t\leq 1}
	\end{align*}
	is a Brownian excursion of length $1$, see, e.g., \cite[Lem.~15]{Pitman} or \cite{Williams}.

	Let $B=(B(t))_{t\geq 0}$ be a Brownian motion. Consider the stochastic differential equation
	\begin{align*}
		\mathrm d X(t) = \left( 4-\frac{(X(t))^2}{ 1-\int_0^t X(s)\ds } \right) \dt + 2\sqrt{X(t)}\,\mathrm d B(t),
			\qquad X(0)=0,
	\end{align*}
	to be solved on $[0,V(X)[$, where $V(X)=\inf\{t\geq 0:\,\int_0^t X(s)\ds=1\}$ and $X(t)=0$ for $t\geq V(X)$.
	For properties of this SDE and its solution, see \cite{Pitman}. In particular, there it is shown that
	this SDE has a unique continuous nonnegative strong solution $X=(X(t))_{t\geq 0}$.
	Moreover, this solution satisfies
	\begin{align}\label{e4}
		(L_1(x))_{x\geq 0} \eqdist (X(t))_{t\geq 0},
	\end{align}
	where $L_1=(L_1(x))_{x\in\R}$ denotes the local time of $B^{\text{ex}}$ up to time $t=1$.
	More precisely, $L_1$ is the continuous density with respect to the Lebesgue measure on $\R$
	of the push-forward measure of the Lebesgue measure on $[0,1]$ under the mapping $B^{\text{ex}}$, i.e.,
	\begin{align}\label{eq:localtime}
		\int_0^1 h(B^{\text{ex}}(t))\dt
			= \int_{-\infty}^\infty h(x)\cdot L_1(x)\dx
	\end{align}
	for all nonnegative Borel measurable $h\colon\R\to\R$.
	Note that $L_1(x)=0$ for $x\leq 0$.

	Consider the stochastic differential equation
	\begin{align*}
		\mathrm dY(t) =4\dt + 2\sqrt{Y(t)}\,\mathrm dB(t), \qquad Y(0)=0.
	\end{align*}
	It is known that this SDE has a unique strong solution $Y=(Y(t))_{t\geq0}$,
	which is a $4$-dimensional squared Bessel process started at $0$, i.e.,
	\begin{align}\label{eq:bessel-4}
		\left( Y(t) \right)_{t\geq0} \eqdist \left( \sum_{k=1}^4 \left(W_k(t)\right)^2 \right)_{t\geq0}
	\end{align}
	with independent Brownian motions $W_k=(W_k(t))_{t\geq0}$ for $k=1,\dots,4$,
	see, e.g., \cite[Chap.~XI]{Revuz-Yor}.
	Using a slight modification of the comparison principle \cite[Prop.~V.2.18]{Karatzas} we obtain
	\begin{align}\label{e5}
		X(t)\leq Y(t)
	\end{align}
	for $t\geq0$. Combining \eqref{eq:localtime}, \eqref{e4}, \eqref{e5}, and \eqref{eq:bessel-4} yields
	\begin{align}\label{eq:Bex-ineq}
		\begin{aligned}[c]
			\int_0^1 \frac{1}{ (B^{\text{ex}}(t))^2+\eps } \dt
				&\leq 1 + \int_0^1 \frac{ 1\left(B^{\text{ex}}(t)\leq 1\right) }{ (B^{\text{ex}}(t))^2+\eps }\dt \\
			&= 1 + \int_{0}^1\frac{1}{x^2+\eps}\cdot L_1(x)\dx \\
			&\eqdist 1 + \int_{0}^1 \frac{1}{t^2+\eps}\cdot X(t)\dt \\
			&\leq 1 + \int_{0}^1 \frac{1}{t^2+\eps}\cdot Y(t)\dt \\
			&\eqdist 1 + \int_{0}^1 \left( \sum_{k=1}^4 \frac{ \left(W_k(t)\right)^2 }{ t^2+\eps } \right)\dt
		\end{aligned}
	\end{align}
	for all $\eps>0$.
	
	Consider the Gaussian random function $\xi=(\xi(t))_{0\leq t\leq 1}$ given by $\xi(0)=1$ and
	\begin{align*}
		\xi(t) = \frac{W(t)}{ \sqrt{t}\cdot\sqrt{\log\left(1+\frac{1}{t}\right)} }
	\end{align*}
	for $0<t\leq 1$, which is bounded due to the law of the iterated logarithm.
	Using \cite[Thm.~12.1]{Lifshits} we get the existence of constants $C>0$ and $a>1$ such that
	\begin{align}\label{eq:prob-xi}
		\prob\left( \sup_{0\leq t\leq 1} \abs{\xi(t)} > q \right)
			\leq \frac{1}{a^{q^2}}
	\end{align}
	for all $q\geq C$. A small computation shows that there exist positive constants $C_1,C_2>0$ such that
	\begin{align*}
		\int_0^1 \frac{ t\cdot\log\left(1+\frac{1}{t}\right) }{ t^2+\eps } \dt
			\leq C_1\cdot \left(\log(1/\eps)\right)^2
	\end{align*}
	for all $0<\eps<C_2$, and consequently
	\begin{align*}
		\int_0^1 \frac{ (W(t))^2 }{ t^2+\eps } \dt
		= \int_0^1 (\xi(t))^2\cdot \frac{ t\cdot\log\left(1+\frac{1}{t}\right) }{ t^2+\eps } \dt
			\leq C_1\cdot \left( \sup_{0\leq t\leq 1} \abs{\xi(t)} \right)^2
			\cdot \left(\log(1/\eps)\right)^2.
	\end{align*}
	Let $r\geq1$. Using \eqref{eq:prob-xi} with $q=\log(1/\eps)$,
	there exists a constant $\tilde C_2>0$ such that
	\begin{align*}
		\prob \left(
				\int_0^1 \frac{ (W(t))^2 }{ t^2+\eps } \dt
				\leq C_1\cdot \left( \log(1/\eps) \right)^4
			\right)
			\geq 1-\eps^r
	\end{align*}
	for all $0<\eps<\tilde C_2$, and hence
	\begin{align}\label{eq:int-W-ineq}
		\prob \left(
				\int_{0}^1 \left( \sum_{k=1}^4 \frac{ \left(W_k(t)\right)^2 }{ t^2+\eps } \right)\dt
				\leq 4C_1\cdot \left( \log(1/\eps) \right)^4
			\right)
			\geq 1-4\cdot\eps^r
	\end{align}
	for all $0<\eps<\tilde C_2$. Combining \eqref{eq:Bex-ineq} and \eqref{eq:int-W-ineq}
	completes the proof.
\end{proof}

\begin{lem}\label{lem2}
	For all $r\geq1$ there exists a constant $C>0$ such that
	\begin{align*}
		\prob \left(
				\int_0^1\frac{1}{ (W(t)-M)^2+\eps }\dt
				\geq C\cdot \left( \log(1/\eps)\right)^4
			\right)
			\leq C\cdot\eps^r
	\end{align*}
	for all $\eps>0$.
\end{lem}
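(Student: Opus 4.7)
The plan is to use the Denisov/Williams decomposition of Brownian motion at its minimum to reduce the statement to Lemma~\ref{lem1}. Let $\tau^{*}$ denote the almost surely unique random time in ${]0,1[}$ at which $W$ attains its minimum $M$. By that decomposition (see, e.g., \cite[Chap.~XII]{Revuz-Yor} or \cite{Pitman}), conditional on $\mathcal{G} = \sigma(\tau^{*},M,W(1))$ the two shifted, reflected pieces
$$R_1(s)=W(\tau^{*}-s)-M,\ s\in[0,\tau^{*}], \qquad R_2(s)=W(\tau^{*}+s)-M,\ s\in[0,1-\tau^{*}],$$
are independent; their conditional laws are those of a $3$-dimensional Bessel bridge on $[0,\tau^{*}]$ from $0$ to $-M\ge 0$ and on $[0,1-\tau^{*}]$ from $0$ to $W(1)-M\ge 0$, respectively.

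A change of variables on each side of $\tau^{*}$ yields the pathwise identity
$$\int_0^1 \frac{1}{(W(t)-M)^2+\eps}\dt = \int_0^{\tau^{*}} \frac{1}{R_1(s)^2+\eps}\ds + \int_0^{1-\tau^{*}} \frac{1}{R_2(s)^2+\eps}\ds.$$
Since $\tau^{*}$ and $1-\tau^{*}$ lie in ${]0,1]}$ almost surely and the terminal values $-M$, $W(1)-M$ are nonnegative, both summands on the right are exactly of the form handled by Lemma~\ref{lem1}.

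Fix $r\ge 1$ and let $C_0>0$ be the constant provided by Lemma~\ref{lem1}. Because the bound of Lemma~\ref{lem1} is uniform in $(T,z)\in{]0,1]}\times{[0,\infty[}$, applying it via regular conditional distributions gives
$$\prob\left( \int_0^{\tau^{*}} \frac{1}{R_1(s)^2+\eps}\ds \ge C_0 (\log(1/\eps))^4 \cond \mathcal{G} \right) \le C_0\eps^r,$$
and the analogous bound for $R_2$. Taking expectations (tower property) and applying a union bound then yields
$$\prob\left( \int_0^1 \frac{1}{(W(t)-M)^2+\eps}\dt \ge 2C_0(\log(1/\eps))^4 \right) \le 2C_0\eps^r,$$
which is the claim with $C=2C_0$.

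The main subtlety will be the careful invocation of the Williams/Denisov decomposition---in particular, the conditional independence of the two pieces and the identification of their conditional laws as Bessel bridges with the required parameters---together with the transfer of the uniform tail bound of Lemma~\ref{lem1} to the conditional setting with the random parameters $(\tau^{*},-M)$ and $(1-\tau^{*},W(1)-M)$. Once these are in place, the remainder of the argument (change of variables, tower property, union bound) is entirely routine; the only null events to discard are $\{\tau^{*}\in\{0,1\}\}$ and the non-uniqueness of the minimizer, both of which have probability zero.
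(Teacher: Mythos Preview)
Your proposal is correct and follows essentially the same route as the paper: condition on $(\tau^{*},M,W(1))$, identify the two reflected pieces as $3$-dimensional Bessel bridges via the Williams/Denisov decomposition, apply Lemma~\ref{lem1} to each piece, and combine by a union bound and the tower property. Your constant bookkeeping with $C=2C_0$ is in fact slightly cleaner than the paper's; note also that the conditional \emph{independence} of the two pieces, while true, is not actually needed---the union bound alone suffices.
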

\begin{proof}
	Let $t^\ast\in{]0,1[}$ be the (a.s.~unique) minimizer of the Brownian motion
	$W$, i.e., $M=W(t^\ast)$. Conditionally on $t^\ast=s$, $M=m$, and $W(1)=w$,
	the process
	\begin{align*}
		\left( W(s-t)-m \right)_{0\leq t\leq s}
	\end{align*}
	is a $3$-dimensional Bessel bridge from $(0,0)$ to $(s,-m)$, and the process
	\begin{align*}
		\left( W(s+t)-m \right)_{0\leq t\leq 1-s}
	\end{align*}
	is a $3$-dimensional Bessel bridge from $(0,0)$ to $(1-s, w-m)$,
	see, e.g., \cite[Prop.~2]{Asmussen}.
	Let $r\geq1$ and $C>0$ be according to Lemma~\ref{lem1}.
	Then we have
	\begin{align*}
		\prob &\left(
				\int_0^1 \frac{1}{ (W(t)-M)^2+\eps }\dt
				\geq C\cdot \left(\log(1/\eps)\right)^4
				\cond t^\ast=s,\ M=m,\ W(1)=w
			\right) \\
		&\leq \prob\left(
				\int_0^s \frac{1}{ (W(t)-m)^2+\eps }\dt
				\geq \frac{C}{2}\cdot \left(\log(1/\eps)\right)^4
				\cond t^\ast=s,\ M=m,\ W(1)=w
			\right) \\
		&\qquad+ \prob\left(
				\int_s^1 \frac{1}{ (W(t)-m)^2+\eps }\dt
				\geq \frac{C}{2}\cdot \left(\log(1/\eps)\right)^4
				\cond t^\ast=s,\ M=m,\ W(1)=w
			\right) \\
		&\leq \frac{C}{2}\cdot\eps^r
			+  \frac{C}{2}\cdot\eps^r
	\end{align*}
	by Lemma~\ref{lem1}, which establishes the claim.
\end{proof}

\begin{lem}\label{lem:upper-bound-4}
	For all $r\geq1$ and for all $\lambda\geq1$ there exists a constant $C>0$ such that
	\begin{align*}
		\prob\left( H_{C,n} \right) \geq 1-C\cdot n^{-r}
	\end{align*}
	for all $n\geq 2$.
\end{lem}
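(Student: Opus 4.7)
The approach is to reduce to Lemma~\ref{lem2}, which bounds the integral $\int_0^1 1/((W-M)^2+\eps)\dt$ for deterministic $\eps$. The starting reduction is the elementary inequality
\begin{align*}
    (W(t)-M+g(\tau_n))^2 \geq (W(t)-M)^2 + g(\tau_n)^2,
\end{align*}
which holds because $W(t)-M \geq 0$ and $g(\tau_n)\geq 0$. This immediately bounds the integral in the definition of $H_{C,n}$ by the integral appearing in Lemma~\ref{lem2}, but with the random value $\eps=g(\tau_n)^2$.

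To handle the randomness of $\tau_n$, I would use a union bound. The key observation is that since $\tau_n$ is the smallest of $n$ subintervals of $[0,1]$, we have $\tau_n \leq 1/n$ by pigeonhole; combined with $\tau_n\in\mathcal{A}$, this forces $\tau_n\in\{2^{-k}:k\geq k_n\}$ with $k_n = \lceil \log_2 n\rceil$. Writing $g_k = g(2^{-k})$, the union bound over these finitely many values yields
\begin{align*}
    \prob(H_{C,n}^c) \leq \sum_{k\geq k_n}\prob\left(\int_0^1 \frac{1}{(W(t)-M)^2 + g_k^2}\dt > C\,(\log(1/g_k))^4\right),
\end{align*}
where each summand now involves only the Brownian motion and deterministic constants.

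The next step is to apply Lemma~\ref{lem2} with $\eps=g_k^2$ and an exponent $r' > r$ (for instance $r' = r+1$). Using $(\log(1/g_k^2))^4 = 16\,(\log(1/g_k))^4$ and taking $C\geq 16\tilde C$ (where $\tilde C$ is the constant from Lemma~\ref{lem2}), each summand is bounded by $\tilde C\,g_k^{2r'}$. Substituting $g_k^2=\lambda k\,(\log 2)\,2^{-k}$ gives a geometric-type sum dominated by its leading term, of order $(\log n)^{r'}\,n^{-r'}$; since $r'>r$, this is at most $C'\cdot n^{-r}$ after absorbing the polylogarithmic prefactor into an enlarged constant.

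The main subtlety, rather than a single hard step, is the coupling between the random $\tau_n$ and the deterministic $\eps$ in Lemma~\ref{lem2}. Two ingredients are both essential: the pigeonhole bound $\tau_n\leq 1/n$ (without which the union bound would include the summand $k=1$, giving only a constant bound since $g_1$ is of order one and the corresponding probability does not decay in $n$), and the strict inequality $r'>r$ in the application of Lemma~\ref{lem2} (needed to absorb the $(\log n)^{r'}$ factor into $n^{-r}$). Once both are in place the remaining calculations are routine.
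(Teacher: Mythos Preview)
Your proposal is correct and follows essentially the same route as the paper: the same pointwise inequality $(W(t)-M+g(\tau_n))^2\geq (W(t)-M)^2+g(\tau_n)^2$, a union bound over the finitely many possible values of $\tau_n$ (using $\tau_n\le 1/n$), and an application of Lemma~\ref{lem2} with an enlarged exponent. The only cosmetic difference is that the paper bounds all admissible $\eps=g(\tau_n)^2$ uniformly by $1/\sqrt{n}$ and multiplies by the crude count $|A_n|\le n$, whereas you sum the resulting geometric series in $k$ explicitly; both variants yield the required $n^{-r}$ bound after choosing the exponent in Lemma~\ref{lem2} large enough.
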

\begin{proof}
	Fix $\lambda\geq 1$. Since $\tau_n\in\mathcal{A}$ and $\tau_n\geq 1/2^{n-1}$,
	we get that $(g(\tau_n))^2$ takes at most $n$ different values, which we denote by $A_n$
	(note that $A_n$ depends on $\lambda$). Hence we get
	\begin{align*}
		\prob(H_{C,n}^c) &=
		\prob \left(
		\int_0^1 \frac{1}{ (W(t)-M+g(\tau_n))^2 }\dt
		\geq C\cdot\left( \log\left({1}/{g(\tau_n)}\right) \right)^4
			\right) \\
		&\le
		\prob \left(
				\int_0^1 \frac{1}{ (W(t)-M)^2+g(\tau_n)^2 }\dt
				\geq C\cdot\left( \log\left({1}/{g(\tau_n)}\right) \right)^4
			\right) \\
		&= \sum_{\eps\in A_n} \prob \left(
				\int_0^1 \frac{1}{ (W(t)-M)^2+\eps }\dt
				\geq \frac{C}{16}\cdot \left( \log(1/\eps) \right)^4,\ 
				(g(\tau_n))^2=\eps
			\right).
	\end{align*}
	Moreover, due to $\tau_n\leq 1/n$ we have
	\begin{align*}
		(g(\tau_n))^2 \leq \lambda\log(n)/n,
	\end{align*}
	and thus there exists $N\geq2$ such that $g(\tau_n)^2\leq 1/\sqrt{n}$ for all $n\geq N$.
	Now, let $r\geq1$ and $C/16>0$ be according to Lemma~\ref{lem2}. Then we get
	\begin{align*}
		\prob(H_{C,n}^c) \leq n\cdot \frac{C}{16}\cdot (1/\sqrt{n})^r
			= \frac{C}{16}\cdot n^{1-\frac{r}{2}}
	\end{align*}
	for all $n\geq N$.
\end{proof}

\subsection{Main Probabilistic Result}

\begin{cor}\label{cor:mainlowerboundprop}
	For all $r\geq 1$ and all $\lambda\geq 72\cdot(2+r)$
	there exists a constant $C>0$ such that
	\begin{align*}
		\prob\left( E_{C,n} \right) \geq 1-C\cdot{n^{-r}}
	\end{align*}
	for all $n\geq 4$.
\end{cor}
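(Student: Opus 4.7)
\medskip

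My plan is a straightforward union bound on the complement of $E_{C,n}$, using the five probability lemmas already proved in this section. Writing
\begin{align*}
	\prob(E_{C,n}^c)
		&\leq \prob(F_n^c) + \prob((J_n^+)^c) + \prob((J_n^-)^c) + \prob(G_{1/2,n}^c) + \prob(H_{C,n}^c) \\
		&\qquad + \sum_{k=\lceil n/2\rceil}^n \bigl( \prob(G_{1,k}^c) + \prob(F_k^c) \bigr),
\end{align*}
I would bound each of the first five terms directly from Lemmas~\ref{lem:bound-F}, \ref{lem:probability-boundprop}, and \ref{lem:probability-bound}, obtaining bounds of the form $(\text{const})\cdot n^{1-\lambda/72}$ for the $F_n$ and $G_{1/2,n}$ terms and $n^{1-\lambda/8}$ for the $J_n^{\pm}$ terms.

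The dominant contribution comes from the two sums over $k\in\{\lceil n/2\rceil,\ldots,n\}$. By Lemmas~\ref{lem:bound-F} and~\ref{lem:probability-boundprop} each summand is at most $15\cdot k^{1-\lambda/72}$, and since $\lambda\geq 72(2+r) > 72$ the exponent $1-\lambda/72$ is negative, so $k^{1-\lambda/72}\leq (n/2)^{1-\lambda/72}$ for every $k\geq\lceil n/2\rceil$. Summing at most $n/2+1$ such terms yields a bound of order $n\cdot (n/2)^{1-\lambda/72} = 2^{\lambda/72-1}\cdot n^{2-\lambda/72}$. The hypothesis $\lambda\geq 72(2+r)$ gives $2-\lambda/72 \leq -r$, so this sum is controlled by $(\text{const})\cdot n^{-r}$, where the constant depends on $\lambda$ only.

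For the $H_{C,n}$ term I would apply Lemma~\ref{lem:upper-bound-4} with the fixed parameter $\lambda$ and the given $r$: this yields a constant $C_0>0$ with $\prob(H_{C_0,n})\geq 1-C_0\cdot n^{-r}$. Then I choose $C\geq C_0$ (so that $H_{C_0,n}\subseteq H_{C,n}$, as noted in the paper) and large enough to absorb all the other constant factors collected above. Since every remaining contribution is $O(n^{-r})$ (either because $1-\lambda/72\leq -1-r$ or $1-\lambda/8\leq -r$, both implied by $\lambda\geq 72(2+r)$), the overall bound $\prob(E_{C,n}^c)\leq C\cdot n^{-r}$ follows for all $n\geq 4$.

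There is no real obstacle here; the only point requiring a little care is the bookkeeping for the sums over $k\geq\lceil n/2\rceil$, where one must use monotonicity of $k\mapsto k^{1-\lambda/72}$ (valid exactly because $\lambda>72$) to pull the worst-case summand out and absorb the factor of $n$ coming from the number of summands into the exponent, which is why the threshold $\lambda\geq 72(2+r)$ (rather than $72(1+r)$) appears in the hypothesis.
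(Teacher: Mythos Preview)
Your proposal is correct and follows essentially the same approach as the paper: a union bound on $E_{C,n}^c$ using Lemmas~\ref{lem:bound-F}, \ref{lem:probability-boundprop}, \ref{lem:probability-bound}, and \ref{lem:upper-bound-4}, with the sums over $k\in\{\lceil n/2\rceil,\ldots,n\}$ handled by pulling out the worst summand $(n/2)^{1-\lambda/72}$ and absorbing the resulting extra factor of $n$ via the hypothesis $\lambda\geq 72(2+r)$. Your explicit remark about enlarging $C$ beyond the constant $C_0$ from Lemma~\ref{lem:upper-bound-4} (using $H_{C_0,n}\subseteq H_{C,n}$) is a slight elaboration of what the paper leaves implicit, but otherwise the arguments coincide.
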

\begin{proof}
	Let $r\geq 1$, $\lambda\geq 72\cdot(2+r)\geq 1$, and
	$C>0$ be according to Lemma~\ref{lem:upper-bound-4}.
	Combining Lemma~\ref{lem:bound-F},
	Lemma~\ref{lem:probability-boundprop},
	Lemma~\ref{lem:probability-bound}, and
	Lemma~\ref{lem:upper-bound-4} yields
	\begin{align*}
	\prob(E_{C,n}^c) &\leq \left(
			\prob(F_{n}^c)
			+ \prob({J_{n}^+}^c)
			+ \prob({J_{n}^-}^c)
			+ \prob(G_{{1}/{2},n}^c)
			+ \prob(H_{C,n}^c)
		\right) \\
	&\qquad+ \left(
			\sum_{k=\lceil n/2\rceil}^n \prob(G_{1,k}^c)
			+ \sum_{k=\lceil n/2\rceil}^n \prob(F_{k}^c)
		\right) \\
	&\leq \left(
			7 n^{-r}
			+n^{-r}
			+n^{-r}
			+8 n^{-r}
			+Cn^{-r}
		\right) \\
	&\qquad+ \left(
			n\cdot 8(n/2)^{1-\lambda/72}
			+n\cdot 7(n/2)^{1-\lambda/72}
		\right)
	\end{align*}
	for all $n\geq 4$.
\end{proof}

\section{Proof of Theorem~\ref{thm:main}}\label{sec:mainProof}
Let $r\geq1$ and $p\geq1$. Moreover, we fix
\begin{align*}
	\lambda\geq 144\cdot(1+p\cdot r) = 72 \cdot(2+2pr).
\end{align*}
According to Corollary~\ref{cor:mainlowerboundprop} there exists
a constant $C>0$ such that
\begin{align*}
	\prob\left( E_{C,n} \right) \geq 1-C\cdot{n^{-2pr}}
\end{align*}
for all $n\geq 4$. Furthermore, due to Corollary~\ref{cor:mainproperror}
there exists a constant $\tilde C>0$ such that for all $n\geq 4$
\begin{align*}
	\Delta_n = \Delta_{n,\lambda}(W) \leq {\tilde C}\cdot \exp(-{1}/{\tilde C}\cdot n^{1/5})
\end{align*}
if $W\in E_{C,n}$. Noting that
\begin{align*}
	\Delta_n \leq -\inf_{0\leq t\leq 1} W(t) \eqdist \abs{Z}
\end{align*}
with $Z\sim\ndist{0}{1}$, we obtain using the Cauchy-Schwarz inequality
\begin{align*}
	\E{ \abs{\Delta_n}^p }
		&= \E{ \abs{\Delta_n}^p\cdot 1_{E_{C,n}} }
			+ \E{ \abs{\Delta_n}^p\cdot 1_{E_{C,n}^c}} \\
	&\leq {\tilde C}^p\cdot \exp(-p/\tilde C\cdot n^{1/5})
		+ \sqrt{ \E{ \abs{\Delta_n}^{2p} } } \cdot \sqrt{1-\prob(E_{C,n})} \\
	&\leq {\tilde C}^p\cdot \exp(-p/\tilde C\cdot n^{1/5})
		+\sqrt{ \E{ \abs{Z}^{2p} } } \cdot \sqrt{C}\cdot n^{-pr},
\end{align*}
for all $n\geq 4$.
This completes the proof of Theorem \ref{thm:main}.

\section*{Acknowledgement}
We thank Klaus Ritter for valuable discussions and comments.

\bibliographystyle{plainnat}
\bibliography{references}

\begin{thebibliography}{21}
\providecommand{\natexlab}[1]{#1}
\providecommand{\url}[1]{\texttt{#1}}
\expandafter\ifx\csname urlstyle\endcsname\relax
  \providecommand{\doi}[1]{doi: #1}\else
  \providecommand{\doi}{doi: \begingroup \urlstyle{rm}\Url}\fi

\bibitem[Asmussen et~al.(1995)Asmussen, Glynn, and Pitman]{Asmussen}
S{\o}ren Asmussen, Peter Glynn, and Jim Pitman.
\newblock Discretization error in simulation of one-dimensional reflecting
  {B}rownian motion.
\newblock \emph{Ann. Appl. Probab.}, 5\penalty0 (4):\penalty0 875--896, 1995.

\bibitem[Borodin and Salminen(2002)]{handbook-bm}
Andrei~N. Borodin and Paavo Salminen.
\newblock \emph{Handbook of {B}rownian motion -- facts and formulae}.
\newblock Probability and its Applications. Birkh\"auser Verlag, Basel, second
  edition, 2002.

\bibitem[Calvin(1997)]{Calvin1997}
James~M. Calvin.
\newblock Average performance of a class of adaptive algorithms for global
  optimization.
\newblock \emph{Ann. Appl. Probab.}, 7\penalty0 (3):\penalty0 711--730, 1997.

\bibitem[Calvin(2001)]{Calvin2001}
James~M. Calvin.
\newblock A one-dimensional optimization algorithm and its convergence rate
  under the {W}iener measure.
\newblock \emph{J. Complexity}, 17\penalty0 (2):\penalty0 306--344, 2001.

\bibitem[Calvin(2007)]{Calvtcs:2007}
James~M. Calvin.
\newblock A lower bound on complexity of optimization on the {W}iener space.
\newblock \emph{Theoretical Computer Science}, 383:\penalty0 132--139, 2007.

\bibitem[Calvin(2011)]{Calvin2011}
James~M. Calvin.
\newblock An adaptive univariate global optimization algorithm and its
  convergence rate under the {W}iener measure.
\newblock \emph{Informatica (Vilnius)}, 22\penalty0 (4):\penalty0 471--488,
  2011.

\bibitem[Harrison(1985)]{harrison}
J.~Michael Harrison.
\newblock \emph{Brownian motion and stochastic flow systems}.
\newblock Wiley Series in Probability and Mathematical Statistics: Probability
  and Mathematical Statistics. John Wiley \& Sons, Inc., New York, 1985.

\bibitem[Hefter and Herzwurm(2016)]{2015-bessel}
Mario Hefter and Andr\'e Herzwurm.
\newblock Optimal strong approximation of the one-dimensional squared {B}essel
  process.
\newblock \emph{ArXiv e-prints}, 2016.

\bibitem[Karatzas and Shreve(1991)]{Karatzas}
Ioannis Karatzas and Steven~E. Shreve.
\newblock \emph{Brownian motion and stochastic calculus}, volume 113 of
  \emph{Graduate Texts in Mathematics}.
\newblock Springer-Verlag, New York, second edition, 1991.

\bibitem[Kushner(1962)]{kushner}
Harold~J. Kushner.
\newblock A versatile stochastic model of a function of unknown and time
  varying form.
\newblock \emph{J. Math. Anal. Appl.}, 5:\penalty0 150--167, 1962.

\bibitem[Lifshits(1995)]{Lifshits}
Mikhail~A. Lifshits.
\newblock \emph{Gaussian random functions}, volume 322 of \emph{Mathematics and
  its Applications}.
\newblock Kluwer Academic Publishers, Dordrecht, 1995.

\bibitem[Mockus(1972)]{Mockus:1972}
Jonas Mockus.
\newblock Bayesian methods for extremum search.
\newblock \emph{Avtomat. i Vy\v cisl. Tehn.}, \penalty0 (3):\penalty0 53--62,
  1972.

\bibitem[Mockus(1974)]{Mockus:1974}
Jonas Mockus.
\newblock On {B}ayesian methods for seeking the extremum.
\newblock In \emph{Optimization Techniques, {IFIP} Technical Conference,
  Novosibirsk, USSR, July 1-7, 1974}, pages 400--404, 1974.

\bibitem[Novak(1988)]{novak}
Erich Novak.
\newblock \emph{Deterministic and stochastic error bounds in numerical
  analysis}, volume 1349 of \emph{Lecture Notes in Mathematics}.
\newblock Springer-Verlag, Berlin, 1988.

\bibitem[Pitman(1999)]{Pitman}
Jim Pitman.
\newblock The {SDE} solved by local times of a {B}rownian excursion or bridge
  derived from the height profile of a random tree or forest.
\newblock \emph{Ann. Probab.}, 27\penalty0 (1):\penalty0 261--283, 1999.

\bibitem[Revuz and Yor(1999)]{Revuz-Yor}
Daniel Revuz and Marc Yor.
\newblock \emph{Continuous martingales and {B}rownian motion}, volume 293 of
  \emph{Grundlehren der Mathematischen Wissenschaften}.
\newblock Springer-Verlag, Berlin, third edition, 1999.

\bibitem[Ritter(1990)]{ritter1990}
Klaus Ritter.
\newblock {A}pproximation and optimization on the {W}iener space.
\newblock \emph{J. Complexity}, 6\penalty0 (4):\penalty0 337--364, 1990.

\bibitem[Shepp(1979)]{Shepp}
Lawrence~A. Shepp.
\newblock The joint density of the maximum and its location for a {W}iener
  process with drift.
\newblock \emph{J. Appl. Probab.}, 16\penalty0 (2):\penalty0 423--427, 1979.

\bibitem[T\"{o}rn and \v{Z}ilinskas(1989)]{ToZi:1989}
Aimo T\"{o}rn and Antanas \v{Z}ilinskas.
\newblock \emph{Global Optimization}.
\newblock Springer, Berlin, 1989.

\bibitem[Williams(1970)]{Williams}
David Williams.
\newblock Decomposing the {B}rownian path.
\newblock \emph{Bull. Amer. Math. Soc.}, 76:\penalty0 871--873, 1970.

\bibitem[{\v{Z}}ilinskas(1985)]{Zi:1985}
Antanas {\v{Z}}ilinskas.
\newblock Axiomatic characterization of a global optimization algorithm and
  investigation of its search strategy.
\newblock \emph{Oper. Res. Lett.}, 4\penalty0 (1):\penalty0 35--39, 1985.

\end{thebibliography}

\end{document}